\theoremstyle{plain}
\newtheorem{theorem}{Theorem}[section]
\newtheorem{corollary}[theorem]{Corollary}
\newtheorem{proposition}[theorem]{Proposition}
\newtheorem{lemma}[theorem]{Lemma}
\theoremstyle{definition}
\newtheorem{definition}[theorem]{Definition}
\newtheorem{remark}[theorem]{Remark}
\theoremstyle{remark}
\newcommand\bP{\boldsymbol{P}}
\newcommand\X{\boldsymbol{X}}
\newcommand\x{\boldsymbol{x}}
\newcommand\y{\boldsymbol{y}}
\newcommand\br{\boldsymbol{r}}
\newcommand\by{\boldsymbol{y}}
\newcommand\bd{\boldsymbol{d}}
\newcommand\bv{\boldsymbol{v}}
\newcommand\bQ{\boldsymbol{Q}}
\newcommand\bu{\boldsymbol{u}}
\newcommand\GG{\mathcal{G}}
\noindent\makebox[0mm][r]{$\bullet$}}
\begin{document}

\title{Quasi-Toric Differential Inclusions}

\author[1]{Gheorghe Craciun}
\author[2]{Abhishek Deshpande}
\author[3]{Hyejin Jenny Yeon}
\affil[1]{Department of Mathematics and Department of Biomolecular Chemistry, University of Wisconsin-Madison, {\tt craciun@math.wisc.edu}.}
\affil[2]{Department of Mathematics, University of Wisconsin-Madison, {\tt deshpande8@wisc.edu}.}
\affil[3]{Department of Mathematics, University of Wisconsin-Madison, {\tt hyeon2@wisc.edu}.}

\maketitle

\begin{abstract}
\emph{Toric differential inclusions} play a pivotal role in providing a rigorous interpretation of the connection between weak reversibility and the persistence of mass-action systems and polynomial dynamical systems. We introduce the notion of \emph{quasi-toric differential inclusions}, which are strongly related to toric differential inclusions, but have a much simpler geometric structure. We show that every toric differential inclusion can be embedded into a quasi-toric differential inclusion and that every quasi-toric differential inclusion can be embedded into a toric differential inclusion. In particular, this implies that weakly reversible dynamical systems can be embedded into quasi-toric differential inclusions.
\end{abstract}

\section{Introduction}

Biological and biochemical systems exhibit a wide array of dynamical properties. One such property is \emph{persistence}, which informally means that no species go extinct. More formally, if $\x(t)$ is a solution of a dynamical system on $\mathbb{R}^n_{\geq 0}$, then it is persistent if $\displaystyle\liminf_{t\rightarrow \infty}\x_i(t)>0$ for all $i=1,2,...,n$ and any initial condition $\x(0)\in\mathbb{R}^n_{>0}$. It is conjectured that if the reaction network is \emph{weakly reversible}, then the dynamics is persistent~\cite{feinberg1987chemical}. This conjecture is known as the \emph{Persistence Conjecture}~\cite{craciun2013persistence}, and is strongly related to the well known \emph{Global Attractor Conjecture}~\cite{craciun2009toric}. Many special cases of these conjectures have already been proved~\cite{anderson2011proof,craciun2013persistence,pantea2012persistence,gopalkrishnan2014geometric}. A full proof of the persistence conjecture has been proposed in~\cite{craciun2015toric} using \emph{toric differential inclusions} as the main tool. It is  therefore essential to have a deeper understanding of toric differential inclusions. This is precisely the theme of the current paper. We introduce a modified version of toric differential inclusions called \emph{quasi-toric differential inclusions}, which have a simpler geometric structure. In addition, we show that one can embed one differential inclusion into another.  

Our paper is organized as follows. In Section~\ref{sec:conjectures}, we describe several open conjectures in the field of reaction networks and mention various approaches towards their proof. In Section~\ref{sec:cones_tools}, we define the notions of polyhedral cones and fans, which play an important role in our analysis. In Section~\ref{sec:tdi}, we introduce toric differential inclusions, which are key dynamical systems in the context of the persistence conjecture and the global attractor conjecture. In Section~\ref{sec:qtdi}, we introduce quasi-toric differential inclusions. In particular, we give an algorithmic procedure to generate quasi-toric differential inclusions that are \emph{well-defined} in a precise sense. The notion of quasi-toric differential inclusions provides for a more geometric way of thinking about toric differential inclusions. In Section~\ref{sec:embed_tdi_qtdi}, we show that any toric differential inclusion can be embedded into a quasi-toric differential inclusion. In Section~\ref{sec:embed_qtdi_tdi}, we show that any quasi-toric differential inclusion can be embedded into a toric differential inclusion. As a consequence, quasi-toric differential inclusions represent an alternate characterization of toric differential inclusions. To embed a dynamical system into a toric differential inclusion, it therefore suffices to embed it into a quasi-toric differential inclusion. 

\section{Euclidean embedded graphs, persistence and permanence}\label{sec:conjectures}

A reaction network can be represented as a finite, directed graph $\GG=(V,E)$ called the Euclidean embedded graph (or E-graph)~\cite{craciun2015toric,craciun2019polynomial,craciun2019endotactic}, where $V\subset\mathbb{R}^n$ is the set of vertices and $E$ is the set of edges that correspond to reactions in the network. We will also denote the edge $(\y,\y')\in E$ by $\y\to \y'\in E$. An E-graph is \emph{reversible} if $\y\to \y'\in E$ implies $\y'\to \y\in E$. An E-graph is \emph{weakly reversible} if every edge is contained in a cycle. An E-graph is \emph{endotactic}~\cite{craciun2013persistence} if for every $\bu\in\mathbb{R}^n$ such that $\bu\cdot(\y'-\y)>0$, there exists $\tilde{\y}\to \tilde{\y}'\in E$ such that $\bu\cdot(\tilde{\y}^{'}-\tilde{\y})<0$ and $\bu\cdot \tilde{\y} > \bu\cdot \y$. Every weakly reversible network is endotactic~\cite{craciun2013persistence}.\\

Every reaction network (or equivalently E-graph $\GG=(V,E)$) generates a set of corresponding dynamical systems, depending on our assumptions about kinetic laws. Assuming \emph{mass-action} kinetics~\cite{voit2015150,guldberg1864studies,yu2018mathematical,gunawardena2003chemical,feinberg1979lectures}, the dynamical systems corresponding to $\GG$ can be written as
\begin{eqnarray}\label{eq:mass-action_constant_rate}
\frac{d\x}{dt} = \displaystyle\sum_{\y\to\y'\in E}k_{\y\to\y'}{\x}^{\y}(\y'-\y),
\end{eqnarray}
where $k_{\y\to\y'}>0$ is the rate constant corresponding to the reaction $\y\to\y'$ and $\x^{\y}={x_1}^{y_1}{x_2}^{y_2}...{x_n}^{y_n}$. Rate constants can often have a time-dependent form to incorporate uncertainty and approximations. In such cases, the dynamical system is given by
\begin{eqnarray}\label{eq:mass-action_time_rate}
\frac{d\x}{dt} = \displaystyle\sum_{\y\to\y'\in E}k_{\y\to\y'}(t){\x}^{\y}(\y'-\y).
\end{eqnarray}
Dynamical systems like (\ref{eq:mass-action_time_rate}) are called \emph{variable-k power law dynamical systems} if there exists $\epsilon>0$ such that for every $\y\to\y'\in E$, we have $\frac{1}{\epsilon} \leq  k_{\y\to\y'}(t)\leq \epsilon$~\cite{craciun2019polynomial,craciun2019endotactic,craciun2013persistence}. A dynamical system is said to be weakly reversible if there exists a weakly reversible E-graph that generates it. (A similar definition holds for endotactic dynamical systems.)

We now proceed by relating weakly reversible and endotactic dynamical systems with the notions of persistence and permanence. Let $\GG=(V,E)$ be an E-graph, whose dynamics can be represented by (\ref{eq:mass-action_time_rate}). Consider a solution $\x(t)$ of (\ref{eq:mass-action_time_rate}). Then a dynamical system of the form (\ref{eq:mass-action_time_rate}) is said to be \\
(i) \emph{persistent} if for any initial condition $\x(0)\in\mathbb{R}^n_{>0}$ and $i=1,2...,n$, we have
\begin{eqnarray}
\displaystyle\liminf_{t\rightarrow T_{\x(0)}}\x_i(t)>0, 
\end{eqnarray}
where $T_{\x(0)}$ is the maximal time for which the solution $\x(t)$ is well-defined.\\
(ii) \emph{permanent} if for any initial condition $\x(0)\in\mathbb{R}^n_{>0}$, there exists a $T>0$ and a compact set $K\subset (\x(0) + S)\cap \mathbb{R}^n_{>0}$, where 
\begin{eqnarray}
S = \text{span}(\y'-\y \mid \y\to \y'\in E),
\end{eqnarray}
such that $\x(t)\in K$ for $t\geq T$.

The following are the most important conjectures about persistence and permanence of dynamical systems generated by reaction networks.

\begin{enumerate}

\item[] \textbf{Persistence Conjecture}: \emph{Every weakly reversible dynamical system is persistent}.

\item[] \textbf{Extended Persistence Conjecture}: \emph{Every variable-$k$ endotactic dynamical system is persistent}.

\item[] \textbf{Permanence Conjecture}: \emph{Every weakly reversible dynamical system is permanent}.

\item[] \textbf{Extended Permanence Conjecture}: \emph{Every variable-$k$ endotactic dynamical system is permanent}.

\end{enumerate}

These conjectures are strongly related to the \emph{Global Attractor Conjecture}, which posits the existence of a unique globally attracting steady-state for a class of dynamical systems called \emph{complex-balanced}. In the past two decades, there has been a flurry of research making progress towards these open conjectures. It is known from the work of Angeli, De Leenheer and Sontag~\cite{angeli2007petri} that if every \emph{siphon} of the reaction network contains a support of a linear conservation law, then the dynamics is persistent. In~\cite{anderson2011proof}, Anderson has proved the global attractor conjecture when the E-graph consists of a single linkage class, by partitioning the vertices of the E-graph into ``tiers". The three dimensional case of the global attractor conjecture was settled in collaboration with Nazarov and Pantea~\cite{craciun2013persistence}. Later, Pantea~\cite{pantea2012persistence} extended this proof to the case when the stoichiometric subspace corresponding to the E-graph is three dimensional. Gopalkrishnan, Miller and Shiu~\cite{gopalkrishnan2014geometric} have used ideas from toric geometry to prove the global attractor conjecture when the E-graph is \emph{strongly endotactic}. Recently, a complete proof of the global attractor conjecture has been proposed~\cite{craciun2015toric}. This proof uses an embedding of weakly reversible dynamical systems into toric differential inclusions as a key step~\cite{craciun2019polynomial}. Moreover, this has been extended in~\cite{craciun2019endotactic} to embed endotactic dynamical systems into toric differential inclusions. It is therefore essential to analyze the structure of toric differential inclusions in much more detail. In what follows, we introduce \emph{quasi-toric differential inclusions}, a family of differential inclusions that have a simpler geometric structure than toric differential inclusions. We show that every toric differential inclusion can be embedded into a quasi-toric differential inclusion and vice-versa. As a consequence, weakly reversible and endotactic dynamical systems can be embedded into quasi-toric differential inclusions. Further, to embed a dynamical system into a toric differential inclusion, it suffices to embed it into a quasi-toric differential inclusion.

\section{Polyhedral cones and polyhedral fans}\label{sec:cones_tools}

A set $C\subset\mathbb{R}^n$ is a \emph{convex polyhedral cone}~\cite{rockafellar1970convex,ziegler2012lectures} if its elements can be expressed as a finite non-negative combination of vectors as given below
\begin{eqnarray}
C=\left\{\displaystyle\sum_{i=1}^k\lambda_iv_i | \lambda_i\in \mathbb{R}_{\geq 0}, v_i\in\mathbb{R}^n\right\}.
\end{eqnarray} 
In this paper, we will refer to a convex polyhedral cone simply as a cone. The polar of a cone $C$, denoted by $C^o$ is defined as 
\begin{eqnarray}
C^o=\{\bu \in \mathbb{R}^n\mid \bu\cdot \x\leq 0\, \text{for all}\, \x\in C\}.
\end{eqnarray}
Intersecting a cone with a supporting hyperplane gives a \emph{face} of the cone. A face of codimension 1 is called a \emph{facet} of the cone. A cone is \emph{pointed} if the origin is a face of the cone. We now define the notion of a \emph{polyhedral fan}. 

\begin{definition}
A \emph{polyhedral fan} $\mathcal{F}$ is a finite set of cones satisfying the following properties \\
(i) Given a cone $C\in\mathcal{F}$, a face of $C$ is also a cone in $\mathcal{F}$.\\
(ii) For any two cones $C,\tilde{C}\in\mathcal{F}$, we have that $C\cap\tilde{C}$ is a face of both $C$ and $\tilde{C}$.
\end{definition}

\begin{figure}[h!]
\centering
\includegraphics[scale=0.3]{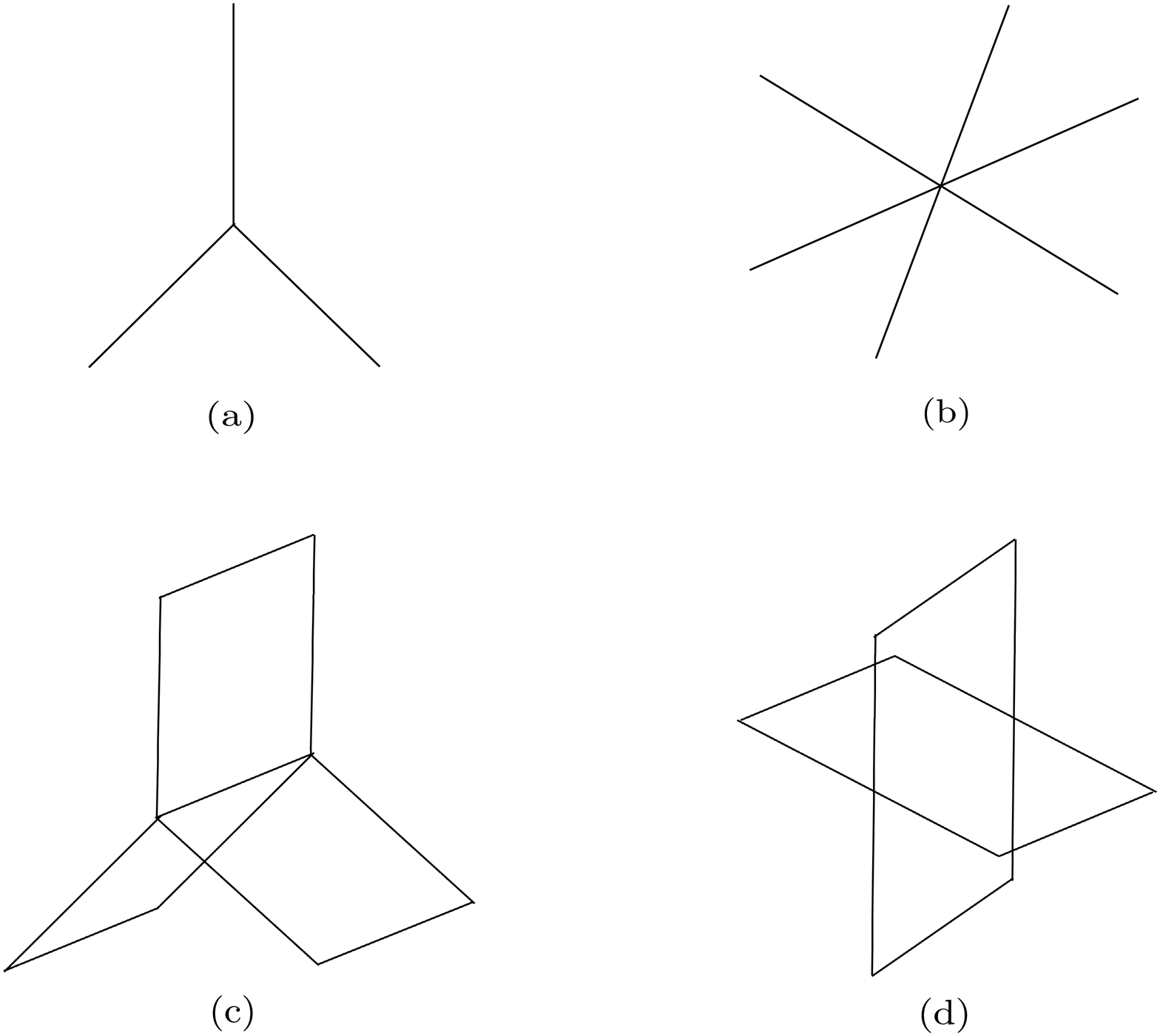}
\caption{\small (a) Polyhedral fan in two dimensions. This fan has seven cones: three two-dimensional or maximal cones, three one-dimensional cones and one zero-dimensional cone. (b) Hyperplane-generated polyhedral fan in two dimensions. This fan has 13 cones: six two-dimensional cones, six one-dimensional cones and one cone of dimension zero. (c) Polyhedral fan in three dimensions. This fan has seven cones: three three-dimensional cones, three two-dimensional cones and one cone of dimension one. (d) Hyperplane-generated polyhedral fan in three dimensions. This fan has nine cones: four three-dimensional cones, four two-dimensional cones and one cone of dimension one. Cones in (a) and (b) are pointed, while cones in (c) and (d) are not pointed.}
\label{fig:polyhedral_fan}
\end{figure} 

A polyhedral fan $\mathcal{F}$ in $\mathbb{R}^n$ is said to be \emph{complete} if $\displaystyle\bigcup_{C\in\mathcal{F}}C=\mathbb{R}^n$. Analogous to the case of a convex polyhedral cone, we will sometimes refer to a complete polyhedral fan simply as a fan. It is known~\cite{craciun2019endotactic} that the maximal cones of a fan define the fan uniquely. A fan is a \emph{hyperplane-generated polyhedral fan} if there exists a set of hyperplanes passing through the origin such that the cones in the fan are exactly the intersections of half-spaces generated by these hyperplanes. Figure~\ref{fig:polyhedral_fan} gives a few examples of polyhedral fans, some of which are hyperplane-generated.

\section{Toric differential inclusions}\label{sec:tdi}

\emph{Differential inclusions} are generalizations of differential equations. As remarked before, differential inclusions play a vital role in the analysis of dynamical systems. In particular, tropically endotactic differential inclusions~\cite{brunner2018robust} and toric differential inclusions~\cite{craciun2015toric} have been used to prove persistence properties of certain dynamical systems.  

\begin{definition}
A \emph{differential inclusion} on a domain $\Omega\subseteq\mathbb{R}^n$ is a dynamical system of the form $\frac{d\x}{dt}\in F(\x)$, where $F(\x)\subseteq \mathbb{R}^n$ for every $\x\in \Omega$.
\end{definition}

We are interested in a specific type of differential inclusions called \emph{toric differential inclusions}:

\begin{definition}
Consider a complete polyhedral fan $\mathcal{F}$ and a positive real number $\delta$. The \emph{toric differential inclusion}~\cite{craciun2015toric,craciun2019polynomial,craciun2019endotactic} given by $\mathcal{F}$ and $\delta$ is a differential inclusion defined on the positive orthant by the Equation
\begin{eqnarray}
\frac{d\x}{dt} \in F_{\mathcal{F},\delta}(\log(\x)),
\end{eqnarray}
where
\begin{eqnarray}\label{eq:initial_tdi}
F_{\mathcal{F},\delta}(\X) = \left(\displaystyle\bigcap_{\substack{C\in\mathcal{F} \\dist(\X,C)\leq \delta}} C\right)^o. 
\end{eqnarray}
\end{definition}
for every $\X\in\mathbb{R}^n$.

\begin{figure}[h!]
\centering
\includegraphics[scale=0.4]{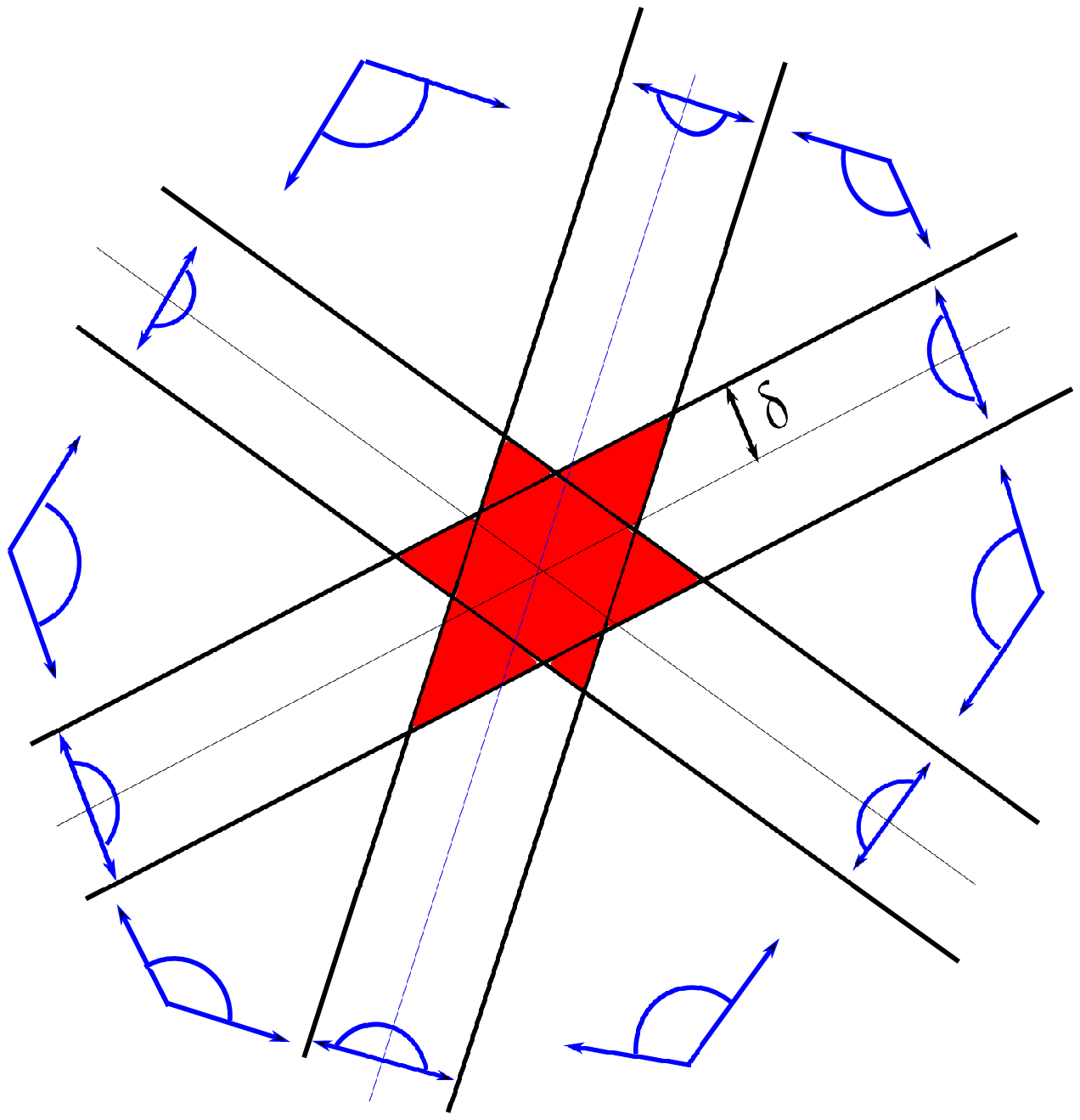}
\caption{\small Right-hand side of a toric differential inclusion (denoted by $F_{\mathcal{F},\delta}(\X)$) for a hyperplane-generated fan $\mathcal{F}$. The red region represents the set of points for which $F_{\mathcal{F},\delta}(\X)=\mathbb{R}^2$. For points outside the red region, the blue cones indicate $F_{\mathcal{F},\delta}(\X)$, which is not $\mathbb{R}^2$.}
\label{fig:tdi}
\end{figure}

By~\cite[Equation 16]{craciun2019endotactic}, we know that (\ref{eq:initial_tdi}) can be written as
 
\begin{eqnarray}\label{eq:simple_tdi}
F_{\mathcal{F},\delta}(\X) = \left(\displaystyle\bigcap_{\substack{C\in\mathcal{F}\\dim(C)=n \\dist(\X,C)\leq \delta}} C\right)^o. 
\end{eqnarray}

Figure~\ref{fig:tdi} gives an example of a toric differential inclusion in two dimensions. In general, toric differential inclusions in higher dimensions are quite complicated because the regions in which $F_{\mathcal{F},\delta}$ is constant have a relatively complicated polyhedral structure. We introduce \emph{quasi-toric differential inclusions} in the next section, which have a much simpler geometric structure.

\section{Quasi-toric differential inclusions}\label{sec:qtdi}

\begin{definition}\label{def:quasi-toric_tdi}
Let $\mathcal{F}$ be a complete polyhedral fan and $\bd=(d_0,d_1,...,d_{n-1})\in\mathbb{R}^n_{>0}$. A \emph{quasi-toric differential inclusion} given by $\mathcal{F}$ and $\bd$ is a differential inclusion $\frac{d\x}{dt}\in F_{\mathcal{F},\bd}(\X)$, where $\X=\log(\x)\in\mathbb{R}^n$ and $F_{\mathcal{F},\bd}(\X)$ is defined by the following procedure.     

\begin{enumerate}

\item[] Step $0$: If $dist(\X,C_0)\leq d_0$, where $C_0\in\mathcal{F}$ is a cone of dimension $0$, then $F_{\mathcal{F},\bd}(\X)=C_0^o$.

\item[] Step $1$: If not Step $0$ and $dist(\X,C_1)\leq d_1$, where $C_1\in\mathcal{F}$ is a cone of dimension $1$, then $F_{\mathcal{F},\bd}(\X)=C_1^o$.

\item[] Step $2$: If not Step $0$ and not Step $1$ and $dist(\X,C_2)\leq d_2$, where $C_2\in\mathcal{F}$ is a cone of dimension $2$, then $F_{\mathcal{F},\bd}(\X)=C_2^o$.

....

\item[] Step $n-1$: If not Step $1$, Step $2$,..., Step $n-2$, and $dist(\X,C_{n-1})\leq d_{n-1}$, where $C_{n-1}\in\mathcal{F}$ is a cone of dimension $n-1$, then $F_{\mathcal{F},\bd}(\X)=C_{n-1}^o$.

\item[] Step $n$: If not Step $1$, Step $2$,..., Step $n-1$, then there exists a unique maximal cone $C_n\in\mathcal{F}$ such that $\X\in C_n$, and we define $F_{\mathcal{F},\bd}(\X)=C_n^o$.

\end{enumerate}

\end{definition}

\begin{definition}\label{prop:quasi_procedure}

Consider a complete polyhedral fan $\mathcal{F}$ and a vector $\bd=(d_0,d_1,...,d_{n-1})\in\mathbb{R}^n_{>0}$. We say that the quasi-toric differential inclusion given by $\mathcal{F}$ and $\bd$ is \emph{well-defined} if the following property holds for any cones $C,\tilde{C}\in\mathcal{F}$ with $dim(C)=k$ and $dim(\tilde{C})=m$: If $dist(\X,C)\leq d_k$ and $dist(\X,\tilde{C})\leq d_m$ for some $\X\in\mathbb{R}^n$, then we have $dist(\X,\hat{C})\leq d_h$, where $\hat{C}=C\cap \tilde{C}$ and $dim(\hat{C})=h$. We will see in Proposition~\ref{prop:generate_valid_d} that for any $\bd\in\mathbb{R}^n_{>0}$, there exists $\tilde{\bd}\in\mathbb{R}^n_{>0}$ such that $\bd\leq\tilde{\bd}$ (where the inequality is defined component wise) and the quasi-toric differential inclusion given by $\mathcal{F}$ and $\tilde{\bd}$ is well-defined.
\end{definition}

\begin{figure}[h!]
\centering
\includegraphics[scale=0.3]{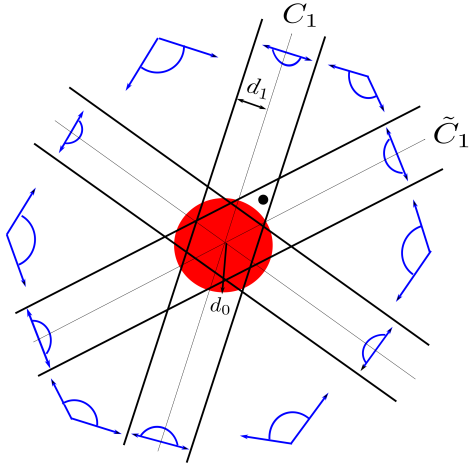}
\caption{\small Example of a quasi-toric differential inclusion that is not well-defined in the sense of Definition~\ref{prop:quasi_procedure}. Consider a point $\X$ labeled by a black dot in the figure. If we iterate through the steps of Definition~\ref{def:quasi-toric_tdi}, we get $dist(\X,C_1)\leq d_1$ and $dist(\X,\tilde{C}_1)\leq d_1$ in Step 1. It is not clear whether $F_{\mathcal{F},\bd}(\X)=C_1^o$ or $F_{\mathcal{F},\bd}(\X)={\tilde{C}_1}^o$ and hence the notion of quasi-toric differential inclusion is not well-defined for this choice of $\bd=(d_0,d_1)$.} 
\label{fig:not_well_defined_qtdi}
\end{figure} 

\begin{remark}
For a well-defined quasi-toric differential inclusion, all the steps in Definition~\ref{def:quasi-toric_tdi} are unambiguous. Figure~\ref{fig:not_well_defined_qtdi} shows a scenario when a point $\X$ is close to two one dimensional cones and it is not clear what the cone $F_{\mathcal{F},\bd}(\X)$ should be. Figure~\ref{fig:qtdi} gives an example of a well-defined quasi-toric differential inclusion in two dimensions.
\end{remark}

\begin{figure}[h!]
\centering
\includegraphics[scale=0.45]{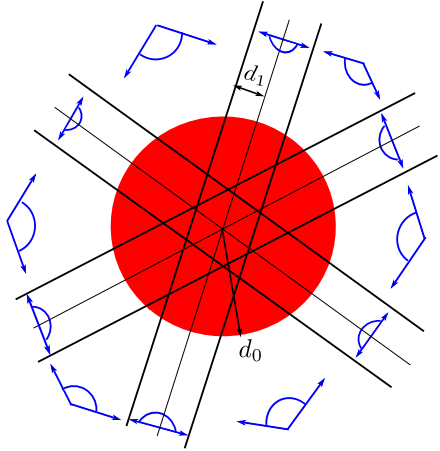}
\caption{\small Right-hand side of a quasi-toric differential inclusion (denoted by $F_{\mathcal{F},\bd}(\X)$) for a hyperplane-generated fan $\mathcal{F}$. The red circle represents the set of points for which $F_{\mathcal{F},\bd}(\X)=\mathbb{R}^2$. For points outside the red circle, the blue cones indicate $F_{\mathcal{F},\bd}(\X)$. The numbers $d_0,d_1$ are chosen so that the quasi-toric differential inclusion generated by $\mathcal{F}$ and $\bd=(d_0,d_1)$ is well-defined in the sense of Definition~\ref{prop:quasi_procedure}.}
\label{fig:qtdi}
\end{figure}

\section{General properties of polyhedral fans and cones}

The next lemma characterizes a cone in a complete polyhedral fan. Restricted to its own subspace, a cone is the intersection of half-spaces containing the cone, that are generated by supporting hyperplanes corresponding to its facets. The lemma is essentially a generalization of~\cite[Property 7]{fulton1993introduction}. We will use this fact crucially in Lemma~\ref{lem:intermediate} (See Equation (\ref{eq:cone_k})). 

\begin{lemma}\label{lem:alternate_char_cone}
Consider a cone $C\subseteq\mathbb{R}^n$ and let $S(C)$ denote the subspace spanned by $C$. Then, we have
\begin{eqnarray}
C={\displaystyle \left(\bigcap_{\sigma\in\text{facets of}\  C}\tilde{H}_{\sigma}\right)\bigcap S(C)},
\end{eqnarray} 
for any choice of $\tilde{H}_{\sigma}$, where $\tilde{H}_{\sigma}$ is a half-space that is formed by a supporting hyperplane $H_{\sigma}$ such that $C\subseteq \tilde{H}_{\sigma}$ and $H_{\sigma}\cap C=\sigma$ for facet $\sigma$ of $C$.
\end{lemma}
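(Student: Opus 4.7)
The plan is to reduce the statement to the classical full-dimensional case, namely Property~7 in~\cite{fulton1993introduction}, by restricting to the subspace $S(C)$ spanned by $C$, inside which $C$ becomes full-dimensional.

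The inclusion $C\subseteq \bigl(\bigcap_\sigma \tilde H_\sigma\bigr)\cap S(C)$ is immediate: $C\subseteq S(C)$ by definition of $S(C)$, and by hypothesis $C\subseteq \tilde H_\sigma$ for every facet $\sigma$.

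For the reverse inclusion, I would work inside the ambient subspace $S(C)$, where $C$ is full-dimensional. The key technical step is to verify that for each facet $\sigma$ of $C$, the set $\tilde H_\sigma\cap S(C)$ is a facet-defining half-space of $C$ when $C$ is viewed as a cone in $S(C)$. To see this, observe that $H_\sigma$ cannot contain $S(C)$: if it did, we would get $C\subseteq H_\sigma$ and hence $H_\sigma\cap C=C$, contradicting $H_\sigma\cap C=\sigma$ since a facet is a proper face. Consequently $H_\sigma\cap S(C)$ is a hyperplane of $S(C)$, and $\tilde H_\sigma\cap S(C)$ is a closed half-space of $S(C)$ that contains $C$ and whose bounding hyperplane meets $C$ exactly in $\sigma$; this is precisely a facet-defining half-space of $C$ in $S(C)$.

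With this reduction in place, the standard fact that a full-dimensional polyhedral cone is the intersection of its facet-defining half-spaces, applied inside $S(C)$, yields
\[
C=\bigcap_{\sigma\in\text{facets of }C}\bigl(\tilde H_\sigma\cap S(C)\bigr)=\Bigl(\bigcap_{\sigma\in\text{facets of }C}\tilde H_\sigma\Bigr)\cap S(C),
\]
which is the desired equality. The main subtlety I expect is precisely the non-uniqueness of $\tilde H_\sigma$ in the ambient space $\mathbb{R}^n$: two supporting hyperplanes that cut out the same facet $\sigma$ can differ along directions orthogonal to $S(C)$. However this ambiguity vanishes upon intersecting with $S(C)$, since inside $S(C)$ the facet $\sigma$ uniquely determines its bounding hyperplane. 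This uniqueness-after-restriction is what makes the choice of $\tilde H_\sigma$ in the statement irrelevant, and is what has to be checked carefully to close the argument.
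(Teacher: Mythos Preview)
Your proof is correct but takes a genuinely different route from the paper. The paper argues the reverse inclusion directly by contradiction: it picks $\x$ in $\bigl(\bigcap_\sigma\tilde H_\sigma\bigr)\cap S(C)$ with $\x\notin C$, connects $\x$ by a line segment to a point $\x'$ in the relative interior of $C$, and lets $\tilde\x$ be the last point of the segment that lies in $C$; since $\tilde\x$ lies on the relative boundary of $C$, which is the union of its facets, $\tilde\x$ sits on some facet $\sigma'$, and then $H_{\sigma'}$ separates $\x$ from $\x'$, contradicting $\x\in\tilde H_{\sigma'}$. Your approach instead restricts everything to $S(C)$, verifies that each $\tilde H_\sigma\cap S(C)$ is the (unique) facet-defining half-space for $\sigma$ inside $S(C)$, and then invokes the full-dimensional result (Fulton's Property~7) as a black box. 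Your reduction is cleaner conceptually and makes explicit \emph{why} the ambient non-uniqueness of $\tilde H_\sigma$ is harmless---it disappears upon intersecting with $S(C)$---whereas the paper's line-segment argument is fully self-contained and effectively re-proves the full-dimensional case rather than citing it.
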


\begin{proof}
If $dim(C)=0$, then $C=S(C)=\{0\}$ and the statement is trivial. Otherwise, note that $C\subseteq \tilde{H}_{\sigma}$ implies 
\begin{eqnarray}
C\subseteq\left(\displaystyle\bigcap_{\sigma\in\text{facets of}\  C} \tilde{H}_{\sigma}\right)\bigcap S(C).
\end{eqnarray}
We now show that if $\tilde{H}_{\sigma}$ are chosen as in the statement of Lemma~\ref{lem:alternate_char_cone}, then
\begin{eqnarray}
\left(\displaystyle\bigcap_{\sigma\in\text{facets of}\  C} \tilde{H}_{\sigma}\right)\bigcap S(C)\subseteq C.
\end{eqnarray}
For contradiction, assume that there exists $\x\in \left(\displaystyle\bigcap_{\sigma\in\text{facets of}\  C} \tilde{H}_{\sigma}\right)\bigcap S(C)$, but $\x\notin C$. Take a point $\x'$ in the relative interior of $C$. Consider the line segment joining $\x$ and $\x'$ and let $\tilde{\x}$ be a point on this line segment within the cone $C$ that is nearest point to $\x$. Certainly, $\tilde{\x}$ must be on the relative boundary of $C$. Since the relative boundary\footnote{Fulton~\cite{fulton1993introduction} actually uses the term \emph{topological boundary} of the cone. In our case, the topological boundary of the cone when restricted to its subspace coincides with its relative boundary.} of $C$ is the union of its facets~\cite[pp. 10]{fulton1993introduction}, the point $\tilde{\x}$ must lie on a facet $\sigma'$ of $C$. Consider a supporting hyperplane $H_{\sigma'}$ such that $H_{\sigma'}\cap C=\sigma'$. Note that $\x,\x'\notin H_{\sigma'}$ and since the line segment joining them intersects $H_{\sigma'}$, it follows that $\x$ and $\x'$ lie on different half-spaces generated by $H_{\sigma'}$. Since $\x'$ was taken to be in the relative interior of $C$, the point $\x$ does not lie in the half-space $\tilde{H}_{\sigma'}$ that contains the cone $C$, contradicting the assumption that $\x\in\tilde{H}_{\sigma'}$.
\end{proof}

\bigskip
The next lemma shows that the operations of performing an orthogonal projection and taking intersection over convex cones in a polyhedral fan commute if the kernel of the projection is the linear subspace corresponding to the intersection of these cones. This fact will be invoked in the proof of Lemma~\ref{lem:general}. (See Equation (\ref{eq:projection_appl})).

\begin{lemma}\label{lem:projection}
Consider a complete polyhedral fan $\mathcal{F}$. Let $C_1,C_2,...,C_r\in\mathcal{F}$ be cones such that 
\begin{eqnarray}
\displaystyle\bigcap_{\substack{i=1}}^r C_i = C.
\end{eqnarray}
Let $S(C)$ denote the subspace spanned by the cone $C$. Let $\pi$ be the orthogonal projection that maps a vector in $\mathbb{R}^n$ to $S(C)^{\perp}$. Then, we have
\begin{eqnarray}
\pi\left(\displaystyle\bigcap_{\substack{i=1}}^r C_i\right) = \displaystyle\bigcap_{\substack{i=1}}^r \pi\left(C_i\right).
\end{eqnarray}
\end{lemma}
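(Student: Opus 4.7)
The inclusion $\pi(\bigcap_i C_i) \subseteq \bigcap_i \pi(C_i)$ is immediate from the functorial behaviour of images, so the work lies entirely in the reverse inclusion. Observe that $\pi(\bigcap_i C_i) = \pi(C) \subseteq \pi(S(C)) = \{0\}$, so the statement reduces to showing $\bigcap_i \pi(C_i) \subseteq \{0\}$. My plan is, given any $\bar{\x} \in \bigcap_i \pi(C_i)$, to construct an element of $C$ projecting to $\bar{\x}$; since $C \subseteq S(C) = \ker\pi$, this forces $\bar{\x} = 0$. Two preliminary facts drive the construction: trivially, $C = \bigcap_i C_i \subseteq C_j$ for every $j$; and for any convex cone containing the origin, $S(C) = C - C$, so each $\ell \in S(C)$ splits as $\ell = \boldsymbol{a} - \boldsymbol{b}$ with $\boldsymbol{a},\boldsymbol{b} \in C$.

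The core of the argument is an iterative ``straightening'' of chosen lifts. Pick $\x_i \in C_i$ with $\pi(\x_i) = \bar{\x}$ and inductively construct vectors $\x_i^{(k)} \in C_1 \cap \cdots \cap C_k \cap C_i$ with $\pi(\x_i^{(k)}) = \bar{\x}$ for $k = 0, 1, \ldots, r$, starting from $\x_i^{(0)} = \x_i$. At the step $k-1 \to k$, the pivot is $\x_k^{(k-1)}$, which already lies in $C_1 \cap \cdots \cap C_k$. The difference $\x_i^{(k-1)} - \x_k^{(k-1)}$ lies in $\ker\pi = S(C)$ because both vectors project to $\bar{\x}$, so write it as $\boldsymbol{a}_i - \boldsymbol{b}_i$ with $\boldsymbol{a}_i, \boldsymbol{b}_i \in C$ and set
\[
\x_i^{(k)} \;:=\; \x_i^{(k-1)} + \boldsymbol{b}_i \;=\; \x_k^{(k-1)} + \boldsymbol{a}_i.
\]
The first expression exhibits $\x_i^{(k)}$ as an element of $C_i$ (since $\boldsymbol{b}_i \in C \subseteq C_i$), the second exhibits it as an element of $C_1 \cap \cdots \cap C_k$ (since $\boldsymbol{a}_i \in C$ lies in every $C_j$, and $\x_k^{(k-1)}$ already belongs to the intersection), and the projection is preserved because $\boldsymbol{b}_i \in \ker\pi$. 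After $k = r$ steps, $\x_1^{(r)} \in C$, so $\bar{\x} = \pi(\x_1^{(r)}) = 0$, as desired.

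I do not foresee a serious obstacle. The whole argument hinges on the single algebraic identity $S(C) = C - C$, which is precisely what lets the two rewritings of $\x_i^{(k)}$ coexist and thereby enlarges the membership intersection at every step while fixing the projection. The only real thing to watch is the bookkeeping in the induction, namely simultaneously tracking membership in $C_i$, membership in the growing intersection $C_1 \cap \cdots \cap C_k$, and the value of $\pi$; all three are encoded in the double equality displayed above, which is why the induction closes.
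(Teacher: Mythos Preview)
Your proof is correct and takes a genuinely different route from the paper's. The paper invokes an external criterion (Theorem~6 of \cite{kushnir2017linear}) which reduces the claim to showing that each union $C_i\cup C_j$ is convex in every direction of $\ker\pi=S(C)$; it then verifies this by a hyperplane-separation argument that leans on the fan axiom that $C_i\cap C_j$ is a common face of $C_i$ and $C_j$. Your argument is entirely self-contained and constructive: you take arbitrary lifts $\x_i\in C_i$ of $\bar{\x}$ and iteratively push them into the growing intersection $C_1\cap\cdots\cap C_k$ using only two facts---that $C\subseteq C_j$ for every $j$, and that $S(C)=C-C$ for a convex cone. The double expression $\x_i^{(k)}=\x_i^{(k-1)}+\boldsymbol b_i=\x_k^{(k-1)}+\boldsymbol a_i$ is exactly what makes the induction close, as you note. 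A pleasant by-product is that your proof never uses the fan structure at all: it establishes the stronger statement that $\bigcap_i\pi(C_i)=\{0\}$ whenever $C_1,\ldots,C_r$ are \emph{any} convex cones (containing the origin) whose intersection is $C$ and $\pi$ is orthogonal projection onto $S(C)^\perp$. The paper's approach, by contrast, buys a connection to a general projection-commutation theorem but at the cost of an external reference and a more delicate separation argument.
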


\begin{proof}
By~\cite[Theorem 6]{kushnir2017linear}, to show that
\begin{eqnarray}
\pi\left(\displaystyle\bigcap_{\substack{i=1}}^r C_i\right) = \displaystyle\bigcap_{\substack{i=1}}^r \pi\left(C_i\right),
\end{eqnarray}
it suffices to show that for any $C_i\neq C_j$, where $i,j\in\{1,2,..,r\}$, we have $C_i\cup C_j$ is convex\footnote{A set $A\subset\mathbb{R}^n$ is said to be convex in a direction $\br\in\mathbb{R}^n$ if for all $\x,\by\in A$ such that $\by-\x=\lambda \br$ for some $\lambda\in\mathbb{R}$, the line segment $[\x,\by]\subset A$.} along the $ker(\pi)$. In our case, $ker(\pi)=S(C)$. Let $\textbf{a},\textbf{b}\in C_i\cup C_j$. We will show that if $\textbf{b}-\textbf{a}\in S(C)$, then either $\textbf{a},\textbf{b}\in C_i$ or $\textbf{a},\textbf{b}\in C_j$. This will imply that the line segment $[\textbf{a},\textbf{b}]\subset C_i\cup C_j$, as required.

For contradiction, assume that $\textbf{a}\in\tilde{C}_i= C_i\setminus(C_i\cap C_j)$ and $\textbf{b}\in \tilde{C}_j=C_j\setminus(C_i\cap C_j)$. Since $\tilde{C}_i$ and $\tilde{C}_j$ are disjoint convex sets, by the hyperplane-separation theorem~\cite{boyd2004convex} there exists $\bv\in\mathbb{R}^n\setminus\{0\}$ and $d>0$ such that the affine hyperplane 
\begin{eqnarray}
H=\{\x\in\mathbb{R}^n | {\bv}^T\x = d\}
\end{eqnarray}
separates $\tilde{C}_i$ and $\tilde{C}_j$, i.e., we have ${\bv}^T\x < d$ for $\x\in\tilde{C}_i$ and ${\bv}^T\x > d$ for $\x\in\tilde{C}_j$. Note that since $C_i\cap C_j$ is a face of both $C_i$ and $C_j$, we get ${\bv}^T\x \leq d$ for $\x\in C_i$ and ${\bv}^T\x \geq d$ for $\x\in C_j$. 
This implies that if $\x\in C_i\cap C_j$, then ${\bv}^T\x = d$ or equivalently that $\x\in H$. Therefore, we get $C_i\cap C_j\in H$. Using $C\subseteq C_i\cap C_j$, we get $C\subseteq H$ and hence $S(C)\subseteq H$. The line segment $[\textbf{a},\textbf{b}]$ intersects both the half-spaces corresponding to the hyperplane $H$ implying that $\textbf{b}-\textbf{a}\not\in H$. Since $S(C)\subseteq H$, we get $\textbf{b}-\textbf{a}\notin S(C)$, a contradiction.
\end{proof}

We now show that points that are close to cones that intersect only at the origin form a bounded set. Consider the following example in two dimensions. (See Figure~\ref{fig:point_origin}).

\begin{figure}[h!]
\centering
\includegraphics[scale=0.35]{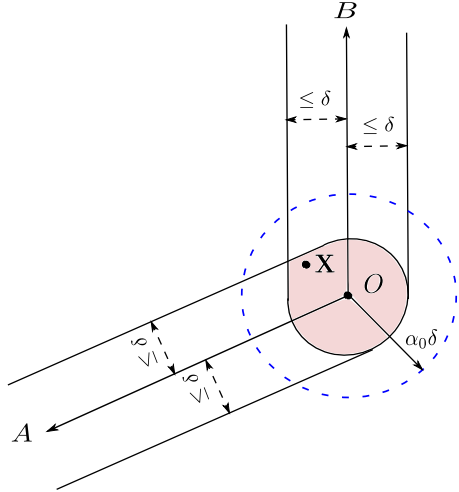}
\caption{\small Two dimensional illustration of Lemma~\ref{lem:base}}
\label{fig:point_origin}
\end{figure} 

Let $O$ be the origin. Let us denote the cones $\overrightarrow{OA}$ and $\overrightarrow{OB}$ by $C_1$ and $C_2$ respectively. Then there exists  $\alpha_0>0$ such that if $\X\in\mathbb{R}^2$ satisfies $dist(\X,C_1)\leq\delta$ and $dist(\X,C_2)\leq\delta$, then we have $dist(\X,\{0\})\leq\alpha_0\delta$, as shown in Figure~\ref{fig:point_origin}.

\begin{lemma}\label{lem:base}

Let $C_1,C_2,...,C_r$ be polyhedral cones in $\mathbb{R}^n$ such that $\displaystyle\bigcap_{\substack{i=1}}^r C_i=\{0\}$. Then there exists $\alpha_0>0$ such that if $\X\in\mathbb{R}^n$ satisfies $dist(\X,C_i)\leq\delta$ for some $\delta>0$ and all cones $C_i$, where $1\leq i\leq r$, then we have $dist(\X,\{0\})\leq\alpha_0\delta$.

\end{lemma}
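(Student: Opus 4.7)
The plan is to exploit the fact that both the Euclidean norm and the distance-to-a-cone function are positively $1$-homogeneous, and then use a compactness argument on the unit sphere.

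First I would define
\begin{equation*}
f(\X) \;=\; \max_{1\le i\le r} \mathrm{dist}(\X,C_i), \qquad g(\X) \;=\; \|\X\|.
\end{equation*}
Both $f$ and $g$ are continuous on $\mathbb{R}^n$, and for every $\lambda\ge 0$ one has $f(\lambda\X)=\lambda f(\X)$ and $g(\lambda\X)=\lambda g(\X)$, because each $C_i$ is a cone (so $\lambda C_i = C_i$ for $\lambda\ge 0$, and translating the argument of $\mathrm{dist}$ by a scalar factor scales the distance by the same factor). Thus the ratio $f/g$ is invariant under positive scaling, so the conclusion $g(\X)\le\alpha_0 f(\X)$ for all $\X\in\mathbb{R}^n$ (and even for $\X=0$ trivially) is equivalent to the existence of a constant $m>0$ such that $f(\X)\ge m$ for every $\X$ on the unit sphere $S^{n-1}=\{\X\in\mathbb{R}^n : \|\X\|=1\}$; one may then take $\alpha_0 = 1/m$.

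Next I would establish that this $m$ exists. Since $S^{n-1}$ is compact and $f$ is continuous, $f$ attains its infimum on $S^{n-1}$ at some point $\X_0\in S^{n-1}$; call this infimum $m$. The key step is to rule out $m=0$. If $m=0$, then $\mathrm{dist}(\X_0,C_i)=0$ for every $i$; since each $C_i$ is a closed set, this forces $\X_0\in C_i$ for every $i$, hence $\X_0\in\bigcap_{i=1}^r C_i=\{0\}$, contradicting $\|\X_0\|=1$. Therefore $m>0$, and setting $\alpha_0 := 1/m$ gives the desired inequality for all $\X\in\mathbb{R}^n$: indeed, for $\X\neq 0$ write $\X=\|\X\|\cdot\hat\X$ with $\hat\X\in S^{n-1}$, and then $f(\X)=\|\X\|f(\hat\X)\ge m\|\X\|$, i.e.\ $\|\X\|\le \alpha_0 f(\X)\le \alpha_0 \delta$ whenever $\mathrm{dist}(\X,C_i)\le\delta$ for all $i$.

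There is no real obstacle in this argument — it is a standard homogeneity-plus-compactness reduction. The only items to check carefully are (i) positive homogeneity of $\mathrm{dist}(\,\cdot\,,C_i)$, which uses that $C_i$ is a cone, and (ii) the closedness of polyhedral cones, which is needed to promote ``$\mathrm{dist}(\X_0,C_i)=0$'' to ``$\X_0\in C_i$''. Both are immediate from the definitions used in the paper.
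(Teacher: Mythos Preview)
Your proof is correct and follows essentially the same approach as the paper: both exploit the positive $1$-homogeneity of $\mathrm{dist}(\,\cdot\,,C_i)$ to reduce to the unit sphere and then use compactness to conclude. The only cosmetic difference is that the paper argues by contradiction via a sequence $\X_k/\|\X_k\|$ with a convergent subsequence, whereas you directly invoke attainment of the infimum of $f$ on $S^{n-1}$; your version is slightly cleaner.
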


\begin{proof}
For contradiction, assume not. Then, there exists $\delta>0$ such that for every $k\in\mathbb{N}$ there is $\X_k\in\mathbb{R}^n$ with $||\X_k||\geq k\delta$ and 
\begin{eqnarray}\label{eq:point_cone}
dist(\X_k,C_i)\leq\delta
\end{eqnarray}
for all $1\leq i\leq r$. Consider the sequence $(\x_k)_{k=1}^{\infty}$ on the unit sphere $S\in\mathbb{R}^n$ given by $\x_k=\frac{\X_k}{||\X_k||}$. Dividing Equation~(\ref{eq:point_cone}) throughout by $||\X_k||$, we get
\begin{eqnarray}
dist(\x_k,C_i)\leq\frac{\delta}{||\X_k||}\leq\frac{1}{k}.
\end{eqnarray}
The sequence $(\x_k)_{k=1}^{\infty}$ has a subsequence with a limit point $\x^*$ on the compact set $S$ and we have $\x^*\in C_i$ for $1\leq i\leq r$. This contradicts the fact that $\displaystyle\bigcap_{\substack{i=1}}^r C_i=\{0\}$.
\end{proof}

The next lemma is based on the following fact: If a point is close to a cone and also lies in the subspace corresponding to a proper face of the cone, then it is close to that face of the cone. Let us consider the same example as before (Figure~\ref{fig:point_subspace}) to understand the geometric intuition behind this result.

\begin{figure}[h!]
\centering
\includegraphics[scale=0.3]{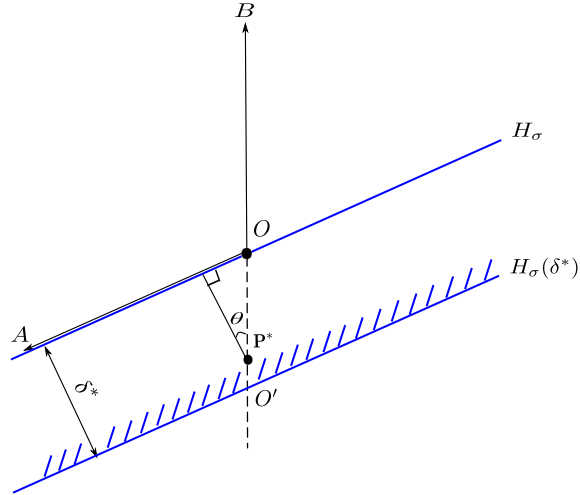}
\caption{\small Two-dimensional illustration of Lemma~\ref{lem:intermediate}.}
\label{fig:point_subspace}
\end{figure} 

Let us denote the cone formed by the vectors $\overrightarrow{OA}$ and $\overrightarrow{OB}$ to be $\tilde{C}$. The face $\overrightarrow{OB}$ of the cone $\tilde{C}$ is then denoted by $C$. Consider any point ${\bP}^*$ in the linear subspace of $C$ (denoted by $S(C)$), such that $dist({\bP}^*,\tilde{C})\leq\delta^*$. Let $H_{\sigma}$ denote the supporting hyperplane corresponding to the facet of $\tilde{C}$ that does not contain the cone $C$ (in this case the facet is $\overrightarrow{OA}$). Let $\tilde{H}_{\sigma}$ denote the half-space generated by $H_{\sigma}$ that contains the cone $\tilde{C}$. Now shift this supporting hyperplane by $\delta^*$ to get an affine hyperplane $H_{\sigma}(\delta^*)$, as shown in the figure. Let $\tilde{H}_{\sigma}(\delta^*)$ be the half-space generated by $H_{\sigma}(\delta^*)$ such that $\tilde{H}_{\sigma}\subseteq\tilde{H}_{\sigma}(\delta^*)$ (the half-space $\tilde{H}_{\sigma}(\delta^*)$ is marked with blue stripes in above figure). Since $dist({\bP}^*,\tilde{C})\leq\delta$ and ${\bP}^*\in S(C)$, it follows that the point ${\bP}^*$ must belong to $\tilde{H}_{\sigma}(\delta^*)\cap S(C)$, which is essentially the cone $\overrightarrow{O'B}$. Note that $\overrightarrow{O'B}$ is a shifted version of the cone $C$ and grows linearly with $\delta^*$. As a result, the point ${\bP}^*$ must be close to $C$ and its distance from $C$ must grow linearly with $\delta^*$. In fact from the figure, we have $dist({\bP}^*,C)\leq\sec(\theta)\delta^*$. The next lemma makes these ideas precise.

\begin{lemma}\label{lem:intermediate}
Consider a polyhedral cone $\tilde{C}$ and let $C$ be a proper face of $\tilde{C}$. Denote by $S(C)$ the subspace spanned by the cone $C$. Then there exists $r_0>0$ such that if ${\bP}^*\in S(C)$ and $dist({\bP}^*,\tilde{C})\leq\delta^*$ for some $\delta^*>0$, then $dist({\bP}^*,C)\leq r_0\delta^*$.
\end{lemma}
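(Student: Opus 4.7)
The plan is to invoke Lemma~\ref{lem:alternate_char_cone} inside $S(\tilde{C})$ to write $\tilde{C}$ as an intersection of half-spaces, convert the hypothesis $dist(\bP^*,\tilde{C})\le\delta^*$ into a collection of linear inequalities on $\bP^*$, and then exhibit a point of $C$ within distance $r_0\delta^*$ of $\bP^*$ by translating along a relative interior direction of $C$. Since $\bP^*\in S(C)\subseteq S(\tilde{C})$, the nearest point of $\tilde{C}$ to $\bP^*$ also lies in $S(\tilde{C})$, so the whole argument can be carried out inside $S(\tilde{C})$.

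For each facet $\sigma$ of $\tilde{C}$, choose a supporting hyperplane $H_\sigma$ whose outward unit normal $\bu_\sigma$ lies in $S(\tilde{C})$. Lemma~\ref{lem:alternate_char_cone} then yields $\tilde{C}=\{\x\in S(\tilde{C}):\bu_\sigma\cdot\x\le 0\text{ for all facets }\sigma\}$, and the hypothesis forces $\bu_\sigma\cdot\bP^*\le\delta^*$ for every $\sigma$. Split the facets into $I=\{\sigma:C\subseteq\sigma\}$ and $J=\{\sigma:C\not\subseteq\sigma\}$. If $\sigma\in I$, then $C\subseteq H_\sigma$ and hence $S(C)\subseteq H_\sigma$, so $\bu_\sigma\perp S(C)$ and the inequality $\bu_\sigma\cdot\bP^*\le\delta^*$ is automatic. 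If $\sigma\in J$, then $\bu_\sigma$ is not orthogonal to $S(C)$ (otherwise $\bu_\sigma\cdot\x=0$ for every $\x\in C$, forcing $C\subseteq H_\sigma\cap\tilde{C}=\sigma$), so the orthogonal projection $\bv_\sigma$ of $\bu_\sigma$ onto $S(C)$ is non-zero. Combining this with the face identity $\tilde{C}\cap S(C)=C$ --- which holds because every $\x\in S(C)=\text{span}(C)$ is a linear combination of points of $C$, on which the supporting functional cutting out $C$ vanishes --- gives $C=\{\x\in S(C):\bv_\sigma\cdot\x\le 0\text{ for all }\sigma\in J\}$, while the hypothesis simplifies to $\bv_\sigma\cdot\bP^*\le\delta^*$ for every $\sigma\in J$.

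The conclusion is now purely linear-algebraic. Pick any $\x_0$ in the relative interior of $C$, which is non-empty since $C$ spans $S(C)$; then $\bv_\sigma\cdot\x_0<0$ for every $\sigma\in J$. Setting $t_0=\delta^*\cdot\max_{\sigma\in J}1/(-\bv_\sigma\cdot\x_0)$, one checks $\bv_\sigma\cdot(\bP^*+t_0\x_0)\le 0$ for every $\sigma\in J$, so $\bP^*+t_0\x_0\in C$ and $dist(\bP^*,C)\le t_0\|\x_0\|=r_0\delta^*$ with $r_0=\|\x_0\|\cdot\max_{\sigma\in J}1/(-\bv_\sigma\cdot\x_0)$, a constant depending only on $\tilde{C}$ and $C$.

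The main step requiring care is the reduction to the "correct" half-space description of $C$ inside $S(C)$ using only the facets in $J$; this is where the face structure of $C$ inside $\tilde{C}$ genuinely enters, via the identity $\tilde{C}\cap S(C)=C$ and the perpendicularity $\bu_\sigma\perp S(C)$ for $\sigma\in I$. Once this is in place, the translation argument in the previous paragraph delivers the constant $r_0$ without further analysis.
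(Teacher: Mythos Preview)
Your proof is correct and takes essentially the same route as the paper: invoke Lemma~\ref{lem:alternate_char_cone}, split the facets of $\tilde{C}$ into $I=\{\sigma:C\subseteq\sigma\}$ and $J=\{\sigma:C\not\subseteq\sigma\}$, use the identity $\tilde{C}\cap S(C)=C$ to obtain the half-space description $C=\{\x\in S(C):\bu_\sigma\cdot\x\le 0\text{ for }\sigma\in J\}$, and then produce a point of $C$ within distance proportional to $\delta^*$ of $\bP^*$ by a translation. The only difference is cosmetic, in this last step: the paper shifts the bounding hyperplanes $H_\sigma$ ($\sigma\in J$) to pass through a common point $\bQ$ on a line through the interior of $\tilde{C}$ and identifies the resulting region $C^*_{\delta^*}$ as a translate of $C$, whereas you translate $\bP^*$ directly along a relative-interior direction $\x_0$ of $C$---a slightly more streamlined execution of the same idea.
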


\begin{proof}
From Lemma~\ref{lem:alternate_char_cone}, the cone $\tilde{C}$ can be written as 
\begin{eqnarray}\label{eq:cone_k}
\tilde{C}=\left(\displaystyle\bigcap_{\sigma\in\text{facets of}\, \tilde{C}} \tilde{H}_{\sigma}\right)\cap S(\tilde{C}),
\end{eqnarray}
where $\tilde{H}_{\sigma}$ is a half-space formed by a supporting hyperplane $H_{\sigma}$ such that $\tilde{C}\subseteq \tilde{H}_{\sigma}$ and $\tilde{C}\cap H_{\sigma}=\sigma$ for some facet $\sigma$ of $\tilde{C}$. If $dist({\bP}^*,\tilde{C})\leq\delta^*$, we get 
\begin{eqnarray}\label{eq:point_halfspace}
dist({\bP}^*,\tilde{H}_{\sigma})\leq\delta^*
\end{eqnarray}
for each such half-space $\tilde{H}_{\sigma}$. Let us denote by $\tilde{H}_{\sigma}(\delta^*)$ the affine half-space that is formed by the hyperplane $H_{\sigma}(\delta^*)$ such that
\begin{eqnarray}\label{eq:distance_hyperplanes}
dist(H_{\sigma},H_{\sigma}(\delta^*))=\delta^*\,\,\text{and}\,\,\tilde{H}_{\sigma}\subseteq \tilde{H}_{\sigma}(\delta^*).
\end{eqnarray}
Using Equations (\ref{eq:point_halfspace}) and (\ref{eq:distance_hyperplanes}) for facets $\sigma$ of $\tilde{C}$ that do not contain the cone $C$, we get 
\begin{eqnarray}\label{eq:particular_shifted_halfspace}
{\bP}^*\in \left(\displaystyle\bigcap_{\substack{\sigma\in\text{facets of}\ \tilde{C}\\ C\not\subseteq\sigma}} \tilde{H}_{\sigma}(\delta^*)\right).
\end{eqnarray}
Since ${\bP}^*\in S(C)$, we get that ${\bP}^*$ belongs to the set $C_{\delta^*}$ defined by the following
\begin{eqnarray}
C_{\delta^*}=\left(\displaystyle\bigcap_{\substack{\sigma\in\text{facets of}\ \tilde{C}\\ C\not\subseteq\sigma}} \tilde{H}_{\sigma}(\delta^*)\right)\bigcap S(C).
\end{eqnarray} 
In general, $C_{\delta^*}$ is \emph{not} an (affine) cone, but we can enlarge it further to an affine cone $C^*_{\delta^*}$, which is defined in the following way. Consider a line $l$ passing through the origin in the interior of cone $\tilde{C}$. Note that each set $H_{\sigma}(\delta^*)\cap l$ consists of exactly one point and define 
\begin{eqnarray}
S_{\delta^*}=\{H_{\sigma}(\delta^*)\cap l\, |\, \sigma\, \text{is a facet of}\, \tilde{C}\, \text{and}\, C\not\subseteq\sigma \}.
\end{eqnarray}
%
Denote by $\bQ$ the point in $S_{\delta^*}$ that is farthest from the origin. Now shift the supporting hyperplanes $H_{\sigma}$ 
corresponding to those facets $\sigma$ of $\tilde{C}$ that do not contain the cone $C$, so that the shifted hyperplanes intersect the line $l$ at $\bQ$. Let us denote this shifted version of $H_{\sigma}$ by $H_{\sigma,\bQ}$. Let $\tilde{H}_{\sigma,\bQ}$ denote the affine half-space generated by $H_{\sigma,\bQ}$ such that $\tilde{H}_{\sigma}\subseteq \tilde{H}_{\sigma,\bQ}$. Let
\begin{eqnarray}\label{eq:shifted_cone_c}
C^*_{\delta^*}=\left(\displaystyle\bigcap_{\substack{\sigma\in\text{facets of}\ \tilde{C}\\ C\not\subseteq\sigma}} \tilde{H}_{\sigma,\bQ}\right)\bigcap S(C).
\end{eqnarray} 
Since $C_{\delta^*}\subseteq C^*_{\delta^*}$, it follows that ${\bP}^*\in C^*_{\delta^*}$. 

We now claim that $C^*_{\delta^*}$ is a shifted version of the cone $C$. Note that $C\subseteq \tilde{C}\cap S(C)$. We also have $C=\tilde{C}\cap H$ for some supporting hyperplane $H$ of $\tilde{C}$. This implies that $S(C)\subseteq H$. Therefore, we get $\tilde{C}\cap S(C)\subseteq \tilde{C}\cap H = C$ and hence $C=\tilde{C}\cap S(C)$. Since $S(C)\subseteq S(\tilde{C})$, using Equation (\ref{eq:cone_k}), it follows that the cone $C$ can be written as
\begin{align}\label{eq:cone_C}
\begin{split}
C &= \left(\displaystyle\bigcap_{\sigma\in\text{facets of}\ \tilde{C}} \tilde{H}_{\sigma}\right)\bigcap S(C)\\
  &=\left(\displaystyle\bigcap_{\substack{\sigma\in\text{facets of}\ \tilde{C}\\ C\not\subseteq\sigma}} \tilde{H}_{\sigma}\right)\bigcap \left(\displaystyle\bigcap_{\substack{\sigma\in\text{facets of}\ \tilde{C}\\ C\subseteq\sigma}}\tilde{H}_{\sigma}\right) \bigcap S(C).
\end{split}
\end{align}
Note that $C\subseteq H_{\sigma}$ for facets $\sigma$ of $\tilde{C}$ that contain $C$. This implies that $S(C)\subseteq H_{\sigma}\subseteq\tilde{H}_{\sigma}$ for facets $\sigma$ of $\tilde{C}$ that contain the cone $C$. Therefore, Equation (\ref{eq:cone_C}) simplifies to
\begin{eqnarray}\label{eq:simple_cone_C}
C = \left(\displaystyle\bigcap_{\substack{\sigma\in\text{facets of}\ \tilde{C}\\ C\not\subseteq\sigma}}\tilde{H}_{\sigma}\right) \bigcap S(C).
\end{eqnarray}
Comparing Equations (\ref{eq:shifted_cone_c}) and (\ref{eq:simple_cone_C}), it follows that $C^*_{\delta^*}$ is just the cone $C$ shifted by the vector $\bQ$. Moreover $||\bQ||$ grows linearly with $\delta^*$, which implies
\begin{eqnarray}
dist({\bP}^*,C)\leq r_0\delta^*.
\end{eqnarray}
for some $r_0>0$, as required.

\end{proof}

\begin{lemma}\label{lem:general}
Consider a complete polyhedral fan $\mathcal{F}$. Let $C_1,C_2,...,C_r\in\mathcal{F}$ be such that $\displaystyle\bigcap_{\substack{i=1}}^r C_i = C$. Then there exists $\alpha_0>0$ such that if $\X\in\mathbb{R}^n$ satisfies 
$dist(\X,C_i)\leq\delta$ for some $\delta>0$ and all cones $C_i$, where $1\leq i\leq r$, then we have $dist(\X,C)\leq\alpha_0\delta$.
\end{lemma}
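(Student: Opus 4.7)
The plan is to reduce this to the previously established lemmas by splitting the distance into a component orthogonal to $S(C)$ and a component inside $S(C)$. Let $\pi:\mathbb{R}^n\to S(C)^{\perp}$ denote the orthogonal projection onto the orthogonal complement of the subspace spanned by $C$, and decompose $\X=\X^*+\pi(\X)$, where $\X^*\in S(C)$. The strategy is to bound $\|\pi(\X)\|$ using Lemma~\ref{lem:base} (after projecting) and then bound $dist(\X^*,C)$ using Lemma~\ref{lem:intermediate}, and finally combine via the triangle inequality.

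First, I would observe that since orthogonal projection is $1$-Lipschitz, $dist(\pi(\X),\pi(C_i))\leq dist(\X,C_i)\leq\delta$ for each $i$. Because $\mathcal{F}$ is a fan, the intersection $C=\bigcap_i C_i$ is itself a cone of the fan and is a face of every $C_i$ (proceeding inductively from the two-cone fan property). In particular $C\subseteq S(C)$, so $\pi(C)=\{0\}$. Applying Lemma~\ref{lem:projection} to the cones $C_1,\ldots,C_r$ yields
\begin{eqnarray}\label{eq:projection_appl}
\bigcap_{i=1}^r \pi(C_i)=\pi\!\left(\bigcap_{i=1}^r C_i\right)=\pi(C)=\{0\}.
\end{eqnarray}
Hence $\pi(C_1),\ldots,\pi(C_r)$ are polyhedral cones in $S(C)^{\perp}$ whose intersection is trivial, and Lemma~\ref{lem:base} (applied in $S(C)^{\perp}$) produces a constant $\alpha_0'>0$ with
\[
\|\pi(\X)\|=dist(\pi(\X),\{0\})\leq\alpha_0'\delta.
\]

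Next, since $\|\X-\X^*\|=\|\pi(\X)\|\leq\alpha_0'\delta$, the triangle inequality gives $dist(\X^*,C_i)\leq(\alpha_0'+1)\delta$ for every $i$. Now $\X^*\in S(C)$ and $C$ is a face of $C_i$, so Lemma~\ref{lem:intermediate} applies to each $C_i$ with $\tilde{C}=C_i$ (the case $C=C_i$ is trivial since then $dist(\X^*,C)\leq(\alpha_0'+1)\delta$ directly). This yields constants $r_{0,i}>0$ such that $dist(\X^*,C)\leq r_{0,i}(\alpha_0'+1)\delta$; taking $r_0=\max_i r_{0,i}$ gives $dist(\X^*,C)\leq r_0(\alpha_0'+1)\delta$. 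Finally,
\[
dist(\X,C)\leq\|\X-\X^*\|+dist(\X^*,C)\leq\bigl(\alpha_0'+r_0(\alpha_0'+1)\bigr)\delta,
\]
and setting $\alpha_0=\alpha_0'+r_0(\alpha_0'+1)$ completes the argument.

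The main obstacle I anticipate is a bookkeeping one rather than a conceptual one: verifying cleanly that $C$ is a face of each $C_i$ (so that Lemma~\ref{lem:intermediate} is applicable) and that Lemma~\ref{lem:projection}'s hypothesis about the kernel matches $S(C)$. Both follow from the fan axioms, but they need to be stated explicitly. The rest is essentially a decomposition of the distance into ``perpendicular to $S(C)$'' (controlled by Lemma~\ref{lem:base}) and ``inside $S(C)$'' (controlled by Lemma~\ref{lem:intermediate}).
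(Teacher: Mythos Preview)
Your proposal is correct and follows essentially the same approach as the paper's proof: project onto $S(C)^{\perp}$, use Lemma~\ref{lem:projection} together with Lemma~\ref{lem:base} to bound the perpendicular component, then use Lemma~\ref{lem:intermediate} on the component in $S(C)$, and combine via the triangle inequality. The only cosmetic difference is that the paper applies Lemma~\ref{lem:intermediate} to a single $C_k$ rather than taking a maximum over all $C_i$, which suffices since one application already yields $dist(\X^*,C)$.
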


\begin{proof}
If $C=\{0\}$, the result follows from Lemma~\ref{lem:base}. Otherwise let $S(C)$ denote the subspace spanned by the cone $C$. Consider a point $\bP\in\mathbb{R}^n$ such that $dist(\bP,C_i)\leq\delta$ for some $\delta>0$ and all cones $C_i$, where $1\leq i\leq r$. We will first show there exists $\beta_0>0$ such that $dist(\bP,S(C))\leq \beta_0\delta$. Towards this, project the polyhedral fan $\mathcal{F}$ and the point $\bP$ onto the subspace $S(C)^{\perp}$. Let $\pi$ denote this projection map. Since projections do not increase distances between sets, we have $dist(\pi({\bP}),\pi({C}_i))\leq\delta$. Note that $\pi\left(\displaystyle\bigcap_{\substack{i=1}}^r C_i\right) = \pi(C)=\{0\}$. Therefore, applying Lemma~\ref{lem:projection}, we get 
\begin{eqnarray}\label{eq:projection_appl}
\displaystyle\bigcap_{\substack{i=1}}^r \pi(C_i)=\pi\left(\displaystyle\bigcap_{\substack{i=1}}^r C_i\right) = \pi(C)= \{0\}.
\end{eqnarray}
By Lemma~\ref{lem:base}, there exists $\beta_0>0$ such that $dist(\pi({\bP}),\{0\})\leq \beta_0\delta$. This implies that $dist(\bP,S(C))\leq \beta_0\delta$.
\bigskip

Let ${\bP}^*$ be the projection of $\bP$ onto the subspace $S(C)$. Therefore, we have
\begin{eqnarray}\label{eq:distance_point_projection}
dist(\bP,{\bP}^*)\leq \beta_0\delta. 
\end{eqnarray}
On the other hand, we also have 
\begin{eqnarray}\label{eq:point_set}
dist(\bP,C_i)\leq \delta
\end{eqnarray}
for $1\leq i\leq r$.
By the triangle inequality, we get
\begin{eqnarray}\label{eq:triangle_point_cone}
dist({\bP}^*,C_i)\leq dist({\bP}^*,\bP) + dist(\bP,C_i) .
\end{eqnarray}
for $1\leq i\leq r$. Let $\delta^*=(1+\beta_0)\delta$. Then Equations (\ref{eq:distance_point_projection}),(\ref{eq:point_set}),(\ref{eq:triangle_point_cone}) together imply that
\begin{eqnarray}\label{eq:distance_project_cone}
dist({\bP}^*,C_i)\leq \delta^*.
\end{eqnarray}
\bigskip
Let $C_k\in\{C_1,C_2,...,C_r\}$. Therefore, we get
\begin{eqnarray}\label{eq:distance_project_cone_final}
dist({\bP}^*,C_k)\leq \delta^*.
\end{eqnarray}
Since $\mathcal{F}$ is a polyhedral fan, $C$ is a face of $C_k$. In addition, we have ${\bP}^*\in S(C)$ and $dist({\bP}^*,C_k)\leq \delta^*$. Therefore, using Lemma~\ref{lem:intermediate} we get
\begin{eqnarray}\label{eq:distance_shifted_cone}
dist({\bP}^*,C)\leq r_0\delta^* = r_0(1+\beta_0)\delta.
\end{eqnarray}
for some $r_0>0.$ By the triangle inequality, we have
\begin{eqnarray}\label{eq:distance_project_intersection}
dist(\bP,C) \leq dist({\bP}^*,\bP) + dist({\bP}^*,C).
\end{eqnarray}
Let $\alpha_0= r_0(1+\beta_0) + \beta_0$. Comparing Equations (\ref{eq:distance_point_projection}),(\ref{eq:distance_shifted_cone}),(\ref{eq:distance_project_intersection}) we get $dist(\bP,C)\leq \alpha_0\delta$, as required.
\end{proof}

\section{Embedding toric differential inclusions into quasi-toric differential inclusions}\label{sec:embed_tdi_qtdi}

A key idea in the proof is the following: if a point is close to a set of cones in a polyhedral fan, then the point is close to the intersection of those cones. Lemma~\ref{lem:base} establishes this fact when the intersection of cones is just the origin. Lemma~\ref{lem:general} proves this fact in full generality by using Lemma~\ref{lem:base} as a sub-step. This will allow us to construct a vector $\bd$ such that a toric differential inclusion $\frac{d\x}{dt}\in F_{\mathcal{F},\delta}(\X)$ can be embedded into a well-defined quasi-toric differential inclusion $\frac{d\x}{dt}\in F_{\mathcal{F},\bd}(\X)$. Proposition~\ref{prop:generate_valid_d} illustrates a procedure for generating such a vector $\bd$, which is subsequently used to prove Theorem~\ref{thm:toric_into_quasi_toric}.


\begin{proposition}\label{prop:generate_valid_d}
Consider a complete polyhedral fan $\mathcal{F}$ and a vector $\bd\in\mathbb{R}^n_{>0}$. Then there exists a vector $\tilde{\bd}\geq \bd$ such that the quasi-toric differential inclusion given by $\mathcal{F}$ and $\tilde{\bd}$ is well-defined in the sense of Definition~\ref{prop:quasi_procedure}. 
\end{proposition}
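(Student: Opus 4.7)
The plan is to construct $\tilde{\bd}$ inductively on the dimension index, starting from $\tilde d_{n-1}$ and working downward. The key observation is that in the well-defined condition $\hat{C}=C\cap\tilde{C}$ is a common face of $C$ and $\tilde{C}$, so $h=\dim\hat{C}\le\min(k,m)$; when the inequality is strict, we must force $\tilde d_h$ to dominate both $\tilde d_k$ and $\tilde d_m$ by a factor controlled by Lemma~\ref{lem:general}. Thus the tolerances should be ordered in the ``wrong'' direction, $\tilde d_0\ge\tilde d_1\ge\cdots\ge\tilde d_{n-1}$, so that lower-dimensional cones receive larger neighborhoods.

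To set this up, first apply Lemma~\ref{lem:general} to every pair $C_1,C_2\in\mathcal{F}$ with $\dim C_i\le n-1$. Since $\mathcal{F}$ is a polyhedral fan, $C_1\cap C_2$ is again a cone of $\mathcal{F}$, and the lemma produces a constant $\alpha(C_1,C_2)>0$ such that $dist(\X,C_i)\le\delta$ for $i=1,2$ implies $dist(\X,C_1\cap C_2)\le\alpha(C_1,C_2)\delta$. Finiteness of $\mathcal{F}$ lets us set $A:=\max\{1,\max_{C_1,C_2}\alpha(C_1,C_2)\}$. Then define $\tilde d_{n-1}:=d_{n-1}$ and, for $k=n-2,n-3,\ldots,0$, set $\tilde d_k:=\max(d_k,A\tilde d_{k+1})$. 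By construction $\tilde d_k\ge d_k$ for every $k$, and since $A\ge 1$ the sequence $(\tilde d_k)_k$ is non-increasing in $k$.

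Finally, to verify that the resulting quasi-toric differential inclusion is well-defined, fix $C,\tilde C\in\mathcal{F}$ with $\dim C=k$, $\dim\tilde C=m$ (both at most $n-1$) and suppose $dist(\X,C)\le\tilde d_k$ and $dist(\X,\tilde C)\le\tilde d_m$. Let $h=\dim(C\cap\tilde C)$. If $h=k$ or $h=m$ then $\hat C\in\{C,\tilde C\}$ and the conclusion is immediate from the hypothesis. Otherwise $h<\min(k,m)$, so $h+1\le k$ and $h+1\le m$. Putting $\delta=\max(\tilde d_k,\tilde d_m)$, Lemma~\ref{lem:general} yields $dist(\X,\hat C)\le A\delta$; monotonicity gives $\delta\le\tilde d_{h+1}$, and the recursion gives $\tilde d_h\ge A\tilde d_{h+1}\ge A\delta$, so $dist(\X,\hat C)\le\tilde d_h$, as needed. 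The main obstacle is recognizing that the tolerances must grow as the dimension decreases and sequencing the induction from top dimension down; once this is in place, Lemma~\ref{lem:general} together with the finiteness of $\mathcal{F}$ reduces the verification to routine bookkeeping.
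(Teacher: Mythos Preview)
Your proof is correct and follows essentially the same strategy as the paper's: invoke Lemma~\ref{lem:general} to obtain a uniform constant, then build $\tilde{\bd}$ to grow geometrically (by that constant) as the dimension index decreases, so that lower-dimensional cones receive proportionally larger tolerances. The only minor differences are that you apply Lemma~\ref{lem:general} to pairs of cones (which is all the well-definedness condition requires) rather than to all subsets of $\mathcal{F}$, and you handle the trivial case $h\in\{k,m\}$ explicitly, whereas the paper passes over it; neither affects the argument.
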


\begin{proof}
Let $\mathcal{A}_1,\mathcal{A}_2,...,\mathcal{A}_{2^{|\mathcal{F}|}}$ denote all the subsets of $\mathcal{F}$. For $1\leq j\leq 2^{|\mathcal{F}|}$, define $\hat{C}_j = \displaystyle\bigcap_{C_i\in \mathcal{A}_j} C_i$. By Lemma~\ref{lem:general}, there exists $\alpha^j_0>0$ such that whenever $\X\in\mathbb{R}^n$ satisfies $dist(\X,C_i)\leq\delta$ for  some $\delta>0$ and all $C_i\in \mathcal{A}_j$, we have $dist(\X,\hat{C}_j)\leq \alpha^j_0\delta$. Let $\alpha=\displaystyle\max_j \alpha^j_0$. Choose $\lambda\geq 1$ such that $\lambda\alpha\geq 1$. 

Define the vector $\tilde{\bd}$ as follows: $\tilde{d}_{n-1}=\max(d_0,d_1,...,d_{n-1}), \tilde{d}_{n-2}=\lambda\alpha \tilde{d}_{n-1}, \tilde{d}_{n-3}=\lambda\alpha \tilde{d}_{n-2},...,\tilde{d}_0=\lambda\alpha \tilde{d}_1$. We will show that the quasi-toric differential inclusion given by $\mathcal{F}$ and $\tilde{\bd}$ is well-defined in the sense of Definition~\ref{prop:quasi_procedure}. Consider cones $C,\tilde{C}\in\mathcal{F}$ with $dim(C)=k$ and $dim(\tilde{C})=m$ such that $dist(\X,C)\leq \tilde{d}_k$ and $dist(\X,\tilde{C})\leq \tilde{d}_m$ for some $\X\in\mathbb{R}^n$. Let us assume that $dim(\tilde{C})\leq dim(C)$. By the definition of $\tilde{\bd}$, we have $\tilde{d}_m\geq \tilde{d}_k$. Consider $\hat{C}=C\cap \tilde{C}$ and let $dim(\hat{C})=h$. Then there exists $\beta_0\leq\alpha$ such that $dist(\X,\hat{C})\leq\beta_0 \tilde{d}_m$. Since $h<m$, by using the definition of $\tilde{\bd}$, we get $dist(\X,\hat{C})\leq\beta_0 \tilde{d}_m\leq \alpha \tilde{d}_m \leq \lambda\alpha \tilde{d}_m\leq \tilde{d}_h$, as required. 
\end{proof}

\begin{theorem}\label{thm:toric_into_quasi_toric}
Consider a complete polyhedral fan $\mathcal{F}$. Given a $\delta>0$ and a toric differential inclusion $\frac{d\x}{dt}\in F_{\mathcal{F},\delta}(\X)$, there exists $\bd=(d_0,d_1,...,d_{n-1})\in\mathbb{R}^n_{>0}$ and a quasi-toric differential inclusion $\frac{d\x}{dt}\in F_{\mathcal{F},\bd}(\X)$ such that the toric differential inclusion can be embedded into the quasi-toric differential inclusion, i.e., $F_{\mathcal{F},\delta}(\X)\subseteq F_{\mathcal{F},\bd}(\X)$ for every $\X\in\mathbb{R}^n$.
\end{theorem}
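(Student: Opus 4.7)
The plan is to produce $\bd$ by invoking Proposition~\ref{prop:generate_valid_d} on a specific initial vector and then to argue the pointwise embedding by passing to polar cones. Let $\alpha\ge 1$ be the uniform constant produced in the proof of Proposition~\ref{prop:generate_valid_d} for the fan $\mathcal{F}$---that is, the maximum over all sub-collections of $\mathcal{F}$ of the constants supplied by Lemma~\ref{lem:general}---and take $\bd_0 := (\alpha\delta,\ldots,\alpha\delta)$ as input. The construction inside Proposition~\ref{prop:generate_valid_d} then produces a well-defined $\bd \ge \bd_0$ with $d_{n-1} = \alpha\delta$ and $d_{j-1} = \lambda\alpha\,d_j$ for some $\lambda \ge 1$, so that $d_k \ge \alpha\delta$ for every $k$ and $d_h \ge \alpha\,d_k$ whenever $h<k$. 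These two inequalities are exactly the calibration the rest of the argument will consume.

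Next, fix $\X\in\mathbb{R}^n$. By~(\ref{eq:simple_tdi}), $F_{\mathcal{F},\delta}(\X) = \hat{C}^o$, where $\hat{C}$ is the intersection of all maximal cones $C'\in\mathcal{F}$ with $dist(\X,C')\le\delta$, and $F_{\mathcal{F},\bd}(\X) = C_k^o$, where $C_k$ is the cone of smallest dimension $k$ selected by Definition~\ref{def:quasi-toric_tdi}. Since both $\hat{C}$ and $C_k$ are closed convex cones through the origin, bipolar duality makes $F_{\mathcal{F},\delta}(\X)\subseteq F_{\mathcal{F},\bd}(\X)$ equivalent to $C_k\subseteq\hat{C}$, and so reduces the theorem to showing that $C_k\subseteq C'$ for every maximal cone $C'$ of $\mathcal{F}$ with $dist(\X,C')\le\delta$.

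The core step is then to establish this inclusion by pitting Lemma~\ref{lem:general} against the minimality of $k$. Fix such a $C'$, and set $D:=d_k$ if $k<n$ and $D:=\delta$ if $k=n$ (in which case $\X\in C_k$ by Step $n$ of Definition~\ref{def:quasi-toric_tdi}). Then $\max(dist(\X,C_k),\, dist(\X,C'))\le D$, so Lemma~\ref{lem:general} applied to the pair $\{C_k,C'\}$ yields $dist(\X,C_k\cap C')\le\alpha\,D$. Because $\mathcal{F}$ is a fan, $C_k\cap C'\in\mathcal{F}$; set $h:=\dim(C_k\cap C')$. If $h<k$, the calibration gives $d_h\ge\alpha\,D$ (namely $d_h\ge\alpha\,d_k$ in the case $k<n$ and $d_h\ge\alpha\delta$ in the case $k=n$), so Step $h$ of Definition~\ref{def:quasi-toric_tdi} would have selected a cone of dimension at most $h<k$ at $\X$, contradicting the minimality of $k$. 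Hence $h=k$, and since $C_k\cap C'$ is a face of $C_k$ of full dimension it must equal $C_k$, yielding $C_k\subseteq C'$ as required.

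The main subtlety I anticipate is exactly what forces the choice $\bd_0=(\alpha\delta,\ldots,\alpha\delta)$ rather than the more natural $(\delta,\ldots,\delta)$: the well-definedness guarantee encoded in Definition~\ref{prop:quasi_procedure} controls the intersection of two cones only when both have dimension at most $n-1$, whereas here one of the two intersecting cones ($C'$) is always maximal. One therefore cannot simply quote well-definedness; one must re-invoke Lemma~\ref{lem:general} directly on the pair $\{C_k,C'\}$, and $\bd$ must have been inflated from the start so that the resulting $\alpha\,D$ bound is absorbed by $d_h$ for every $h<k$. Once that calibration is in place, the theorem reduces to a clean contradiction with the minimality built into Definition~\ref{def:quasi-toric_tdi}, and no machinery beyond Lemma~\ref{lem:general} is needed.
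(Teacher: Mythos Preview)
Your argument is correct and follows the same skeleton as the paper's proof: build $\bd$ as a geometric progression with ratio $\lambda\alpha$ (via the construction inside Proposition~\ref{prop:generate_valid_d}), then show that the quasi-toric cone $C_k$ lies inside the toric cone by applying Lemma~\ref{lem:general} to an intersection and contradicting the minimality of $k$. There are two tactical differences. First, the paper sets $d_{n-1}=\delta$ while you set $d_{n-1}=\alpha\delta$. Second, the paper first collapses all nearby cones to the single cone $C=\bigcap_i C_i$, bounds $dist(\X,C)\le\alpha\delta$, and then intersects $C$ with $C_l$; you instead intersect $C_k$ with each maximal $C'$ individually and treat the case $k=n$ by hand. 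Your inflated $d_{n-1}$ and your explicit remark that Definition~\ref{prop:quasi_procedure} does not cover pairs involving a maximal cone make the boundary case $k=n-1$ go through uniformly, whereas the paper's displayed inequality $\alpha\delta\le d_l$ at $l=n-1$ tacitly requires either $\alpha\le 1$ or a separate observation that $dist(\X,C_l)\le\delta$ already holds there. Conversely, the paper's all-at-once reduction to $C$ is slightly more economical in that it invokes Lemma~\ref{lem:general} on the full family once rather than pair-by-pair.
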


\begin{proof}

Let $\mathcal{A}_1,\mathcal{A}_2,...,\mathcal{A}_{2^{|\mathcal{F}|}}$ denote all the subsets of $\mathcal{F}$. For $1\leq j\leq 2^{|\mathcal{F}|}$, define $\hat{C}_j = \displaystyle\bigcap_{C_i\in \mathcal{A}_j} C_i$. By Lemma~\ref{lem:general}, there exists $\alpha^j_0>0$ such that whenever $\X\in\mathbb{R}^n$ satisfies $dist(\X,C_i)\leq\delta$ for all $C_i\in \mathcal{A}_j$, we have $dist(\X,\hat{C}_j)\leq \alpha^j_0\delta$. Let $\alpha=\displaystyle\max_j \alpha^j_0$. Choose $\lambda\geq 1$ such that $\lambda\alpha\geq 1$. Define the vector $\bd$ as follows: $d_{n-1}=\delta, d_{n-2}=\lambda\alpha d_{n-1}, d_{n-3}=\lambda\alpha d_{n-2},...,d_0=\lambda\alpha d_1$. Note that, as in the proof of Proposition~\ref{prop:generate_valid_d}, it follows that the quasi-toric differential inclusion given by $\mathcal{F}$ and $\bd$ is well-defined.

Fix $\X\in\mathbb{R}^n$. Let $F_{\mathcal{F},\delta}(\X)=C^o$ for some $C\in\mathcal{F}$. Then there exists $C_1,C_2,...,C_r\in\mathcal{F}$ with $\displaystyle\bigcap_{\substack{i=1}}^r C_i=C$ such that $dist(\X,C_i)\leq\delta$. From Lemma~\ref{lem:general}, we get that there is $\beta_0\leq\alpha$ such that $dist(\X,C)\leq \beta_0\delta$. Let $F_{\mathcal{F},\bd}(\X)=C^o_l$ and $dim(C_l)=l$. This means that $dist(\X,C_l)\leq d_l$. We will show that $C_l\subseteq C$. For contradiction, assume that $C_l\not\subseteq C$. By the definition of $\bd$, we have $dist(\X,C)\leq \beta_0\delta\leq \alpha\delta\leq d_l$. Therefore, $l<dim(C)$ (otherwise the quasi-toric differential inclusion given by $\mathcal{F}$ and $\bd$ would not be well-defined). Consider the cone $C_g=C\cap C_l$ and let $dim(C_g)=g$. Our assumption $C_l\not\subseteq C$ implies $g<l$. By Lemma~\ref{lem:general}, there exists $\tilde{\beta}_0\leq\alpha$ such that $dist(\X,C_g)\leq \tilde{\beta}_0d_l$. By using the definition of $\bd$, we get $\tilde{\beta}_0d_l\leq\alpha d_l\leq \lambda\alpha d_l \leq d_g$. This implies that $dist(\X,C_g)\leq d_g$, contradicting the fact that $F_{\mathcal{F},\bd}=C^o_l$. Therefore, we have $C_l\subseteq C$ and 
\begin{eqnarray}
F_{\mathcal{F},\delta}(\X) = C^o \subseteq C^o_l = F_{\mathcal{F},\bd}(\X),
\end{eqnarray}
as required.

\end{proof}

See Figure~\ref{fig:tdi_qtdi} for an illustration of the embedding of a toric differential inclusion into a quasi-toric differential inclusion.

\begin{figure}[h!]
\centering
\includegraphics[scale=0.4]{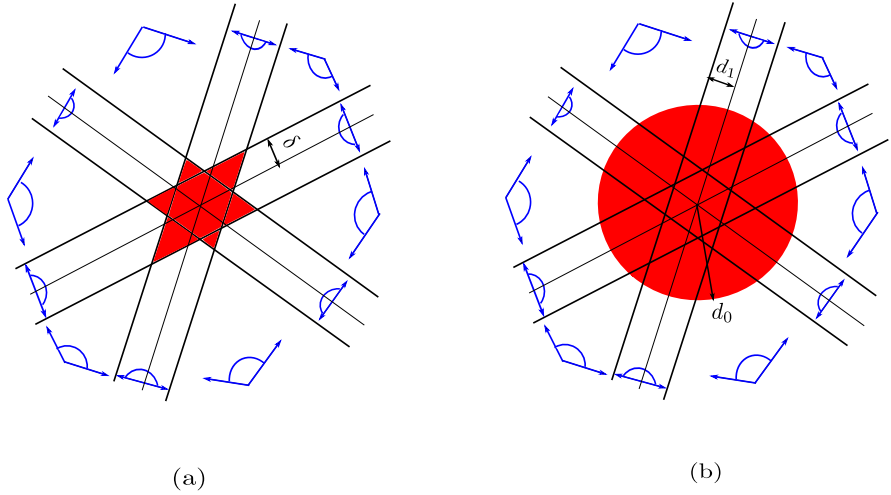}
\caption{\small (a) RHS of a toric differential inclusion (denoted by $F_{\mathcal{F},\delta}(\X)$) for a hyperplane-generated fan $\mathcal{F}$. (b) RHS of a quasi-toric differential inclusion (denoted by $F_{\mathcal{F},{\bd}}(\X)$) such that the toric differential inclusion given in part (a) can be embedded into this quasi-toric differential inclusion, i.e., $F_{\mathcal{F},\delta}(\X)\subseteq F_{\mathcal{F},\bd}(\X)$ for every $\X\in\mathbb{R}^n$. As in the proof of Theorem~\ref{thm:toric_into_quasi_toric}, the vector $\bd$ is constructed as follows: we set $d_1=\delta$ and choose $d_0$ large enough ($d_0=\lambda\alpha d_1$) so that the quasi-toric differential inclusion is well-defined.}
\label{fig:tdi_qtdi}
\end{figure}

\begin{corollary}
Any variable-$k$ weakly reversible dynamical system can be embedded into a quasi-toric differential inclusion. Similarly, any variable-$k$ endotactic dynamical system can be embedded into a quasi-toric differential inclusion.
\end{corollary}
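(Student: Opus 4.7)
The plan is to obtain the corollary as a direct composition of two embeddings that are already in hand: the embedding of weakly reversible (respectively, endotactic) variable-$k$ dynamical systems into toric differential inclusions, established in \cite{craciun2019polynomial} (respectively, \cite{craciun2019endotactic}), and the embedding of toric differential inclusions into quasi-toric differential inclusions, which is precisely Theorem~\ref{thm:toric_into_quasi_toric} of the current paper. So the proof is a transitivity argument for the relation ``can be embedded into'' on differential inclusions.

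First I would recall the notion of embedding for differential inclusions: a dynamical system $\tfrac{d\x}{dt} = f(\x, t)$ with state space $\R^n_{>0}$ is embedded into a differential inclusion $\tfrac{d\x}{dt} \in G(\X)$ if $f(\x,t) \in G(\log \x)$ for every $\x \in \R^n_{>0}$ and every $t$ in the domain of the solution; equivalently, every solution of the dynamical system is a solution of the differential inclusion. Since this notion is visibly transitive (if $G(\X) \subseteq H(\X)$ pointwise, then any system embedded into $G$ is also embedded into $H$), it suffices to chain two embeddings.

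Next I would invoke the two prior results. By the main embedding theorems of \cite{craciun2019polynomial} and \cite{craciun2019endotactic}, given any variable-$k$ weakly reversible (or variable-$k$ endotactic) dynamical system associated to an E-graph $\GG$, there exists a complete polyhedral fan $\mathcal{F}$ and a positive real number $\delta > 0$ such that the system is embedded into the toric differential inclusion $\tfrac{d\x}{dt} \in F_{\mathcal{F},\delta}(\X)$. Now apply Theorem~\ref{thm:toric_into_quasi_toric} to this same fan $\mathcal{F}$ and constant $\delta$: it produces a vector $\bd \in \R^n_{>0}$ such that the quasi-toric differential inclusion $\tfrac{d\x}{dt} \in F_{\mathcal{F},\bd}(\X)$ is well-defined and satisfies $F_{\mathcal{F},\delta}(\X) \subseteq F_{\mathcal{F},\bd}(\X)$ for every $\X \in \R^n$. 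Composing, the dynamical system is embedded into $F_{\mathcal{F},\bd}$, which is the desired conclusion.

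There is no serious obstacle here; the statement is advertised in the introduction as a corollary, and its only content beyond the rest of the paper is the transitivity observation. The only mild subtlety is bookkeeping: one must make sure the \emph{same} fan $\mathcal{F}$ is used in both embeddings (which is automatic, since Theorem~\ref{thm:toric_into_quasi_toric} takes any fan and any $\delta$ as input), and one must note that the variable-$k$ hypothesis is used only in the first step, in order to apply the results of \cite{craciun2019polynomial,craciun2019endotactic}; the second step is purely geometric and is insensitive to the specific kinetics.
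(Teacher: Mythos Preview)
Your proposal is correct and follows exactly the same approach as the paper's own proof: invoke the embedding results of \cite{craciun2019polynomial,craciun2019endotactic} to get a toric differential inclusion, then apply Theorem~\ref{thm:toric_into_quasi_toric} and use transitivity of embedding. The paper's proof is a single sentence to this effect; your version simply spells out the details.
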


\begin{proof}
This follows from Theorem~\ref{thm:toric_into_quasi_toric} and the results on embedding dynamical systems into toric differential inclusions in~\cite{craciun2019polynomial} and~\cite{craciun2019endotactic}.
\end{proof}

\section{Embedding quasi-toric differential inclusions into toric differential inclusions}\label{sec:embed_qtdi_tdi}

\begin{theorem}\label{thm:quasitoric_into_toric}
Consider a complete polyhedral fan $\mathcal{F}$. Given $\bd=(d_0,d_1,...,d_{n-1})\in\mathbb{R}^n_{>0}$ and a (well-defined) quasi-toric differential inclusion $\frac{d\x}{dt}\in F_{\mathcal{F},\bd}(\X)$, there exists a $\delta>0$ and a toric differential inclusion $\frac{d\x}{dt}\in F_{\mathcal{F},\delta}(\X)$ such that the quasi-toric differential inclusion can be embedded into the toric differential inclusion, i.e., $F_{\mathcal{F},\bd}(\X)\subseteq F_{\mathcal{F},\delta}(\X)$ for every $\X\in\mathbb{R}^n$.
\end{theorem}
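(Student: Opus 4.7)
The plan is to set $\delta := \max\{d_0, d_1, \ldots, d_{n-1}\}$ and verify the inclusion $F_{\mathcal{F},\bd}(\X) \subseteq F_{\mathcal{F},\delta}(\X)$ pointwise. Fix $\X \in \mathbb{R}^n$; Definition~\ref{def:quasi-toric_tdi} together with well-definedness assigns $\X$ a unique cone $C_\X \in \mathcal{F}$ with $F_{\mathcal{F},\bd}(\X) = C_\X^o$. By construction, either $\dim C_\X = k \leq n-1$ and $dist(\X, C_\X) \leq d_k$, or $\dim C_\X = n$ and $\X \in C_\X$; in both cases $dist(\X, C_\X) \leq \delta$.

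The key structural input is the fan-theoretic identity
\begin{equation*}
C_\X \;=\; \bigcap \bigl\{\, M \in \mathcal{F} : \dim M = n,\ C_\X \leq M \,\bigr\},
\end{equation*}
valid for any cone in a complete polyhedral fan (and trivial when $C_\X$ is itself maximal). Using this together with the distance bound above, every maximal $M$ containing $C_\X$ satisfies $dist(\X, M) \leq dist(\X, C_\X) \leq \delta$, so every such $M$ appears in the index set of the simplified toric formula (Equation~(\ref{eq:simple_tdi})). Since enlarging an index set can only shrink the intersection,
\begin{equation*}
\bigcap_{\substack{M \in \mathcal{F},\ \dim M = n \\ dist(\X, M) \leq \delta}} M \;\subseteq\; \bigcap_{\substack{M \in \mathcal{F},\ \dim M = n \\ C_\X \leq M}} M \;=\; C_\X.
\end{equation*}
Passing to polars reverses inclusion and produces $F_{\mathcal{F},\bd}(\X) = C_\X^o \subseteq F_{\mathcal{F},\delta}(\X)$, as desired.

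The main obstacle is justifying the identity $C_\X = \bigcap\{M \in \mathcal{F} : \dim M = n,\ C_\X \leq M\}$. This is classical in toric and polyhedral geometry but is not stated in the excerpt, so to keep the paper self-contained I would include a short lemma whose proof uses the completeness of $\mathcal{F}$ together with the face identity $M \cap \operatorname{span}(C_\X) = C_\X$ (which holds whenever $C_\X$ is a face of $M$): given $\x$ in the intersection and any $p$ in the relative interior of $C_\X$, the segment $[p,\x]$ lies in every max cone containing $C_\X$, and completeness of $\mathcal{F}$ forces the initial portion of this segment into a cone of the star of $C_\X$, which combined with the face identity pins $\x$ down to $C_\X$. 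Notably, no quantitative input from Lemmas~\ref{lem:base}--\ref{lem:general} is needed for this direction of the equivalence; in contrast to the construction in Theorem~\ref{thm:toric_into_quasi_toric}, the embedding here is essentially combinatorial, and the explicit choice $\delta = \max_k d_k$ (rather than some elaborate function of the fan) already suffices.
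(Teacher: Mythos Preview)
Your proof is correct and makes the same choice $\delta = \max_k d_k$ as the paper, but you take an unnecessary detour. By electing to work with the simplified formula~(\ref{eq:simple_tdi}), which intersects only over \emph{maximal} cones, you are forced to invoke the fan-theoretic identity $C_\X = \bigcap\{M\in\mathcal{F}:\dim M = n,\ C_\X\leq M\}$ and then propose a separate lemma to justify it. The paper sidesteps this entirely by using the original formula~(\ref{eq:initial_tdi}), which intersects over \emph{all} cones $C\in\mathcal{F}$ with $dist(\X,C)\leq\delta$: since $C_\X$ is itself one of these cones, the intersection is trivially contained in $C_\X$, and passing to polars finishes the proof in a single line. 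No auxiliary lemma or appeal to completeness is needed for this direction.
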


\begin{proof}
Let $\delta=\max\{d_0,d_1,...,d_{n-1}\}$. Fix $\X\in\mathbb{R}^n$ and let $F_{\mathcal{F},\bd}(\X)=C^o_k$, where $C^o_k$ is the polar of cone $C_k\in \mathcal{F}$ such that $dist(\X,C_k)\leq d_k$. It follows that $dist(\X,C_k)\leq\delta$. Therefore, we have 
\begin{eqnarray}
 \left(\displaystyle\bigcap_{\substack{C\in\mathcal{F}\\dist(\X,C)\leq\delta}} C\right)\ \subseteq\  C_k,
\end{eqnarray}
implying that $F_{\mathcal{F},\bd}(\X)\subseteq F_{\mathcal{F},\delta}(\X)$, as required.
\end{proof}

\begin{figure}[h!]
\centering
\includegraphics[scale=0.4]{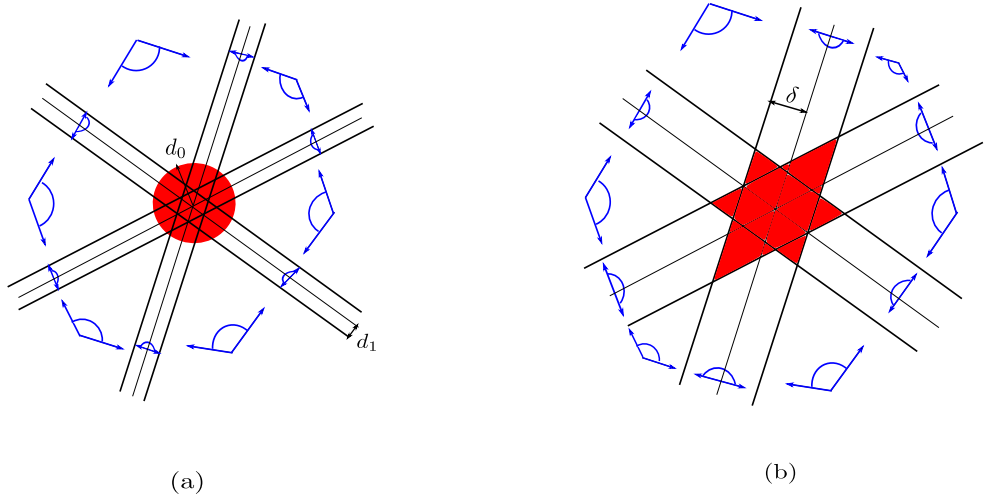}
\caption{\small (a) RHS of a quasi-toric differential inclusion (denoted by $F_{\mathcal{F},\bd}(\X)$) for a hyperplane-generated fan $\mathcal{F}$. (b) RHS of a toric differential inclusion (denoted by $F_{\mathcal{F},\delta}(\X)$) such that the quasi-toric differential inclusion given in part (a) can be embedded into this toric differential inclusion, i.e., $F_{\mathcal{F},\bd}(\X)\subseteq F_{\mathcal{F},\delta}(\X)$ for every $\X\in\mathbb{R}^n$. As in the proof of Theorem~\ref{thm:quasitoric_into_toric}, we choose $\delta=\max(d_0,d_1)=d_0$.}
\label{fig:qtdi_tdi}
\end{figure}

Refer to Figure~\ref{fig:qtdi_tdi} for an illustration of the embedding of a quasi-toric differential inclusion into a toric differential inclusion.

\section{Acknowledgements}

A.D. would like to thank Van Vleck Visiting Assistant Professorship from the Math department at University of Wisconsin Madison. G.C. acknowledges support from NSF grant DMS-1816238. 

\bibliographystyle{amsplain}
\bibliography{Bibliography}

\end{document}